\newcommand{\xycenter}[1]{
	\begin{center}
	\mbox{\xymatrix{#1}}
	\end{center}
	}
\theoremstyle{plain}
\newtheorem{theorem}{Theorem}[section]
\newtheorem{proposition}[theorem]{Proposition}
\newtheorem{lemma}[theorem]{Lemma}
\theoremstyle{definition}
\newtheorem{definition}[theorem]{Definition}
\theoremstyle{remark}
\newtheorem{remark}[theorem]{Remark}
\newcommand{\sheaf}[1]{\mathscr{#1}}
\newcommand{\OO}{\sheaf{O}}
\newcommand{\MM}{\sheaf{M}}
\newcommand{\EE}{\sheaf{E}}
\newcommand{\FF}{\sheaf{F}}
\newcommand{\C}{\mathbb C}
\renewcommand{\P}{\mathbb P}
\DeclareMathOperator{\Pic}{\mathrm{Pic}}
\DeclareMathOperator{\coker}{\mathrm{coker}}
\newcommand{\im}{\mathrm{im}}
\newcommand{\id}{\mathrm{id}}
\newcommand{\Pfad}{\mathrm{adj}^{\mathrm{Pf}}}
\newcommand{\Pf}{\mathrm{Pf}}
\newcommand{\wMM}{\widetilde{\MM}}
\newcommand{\bMM}{\overline{\mathcal{M}}}
\newcommand{\oMM}{\mathcal{M}}
\newcommand{\mf}{\mathcal{MF}}
\newcommand{\sk}{\mathcal{S}}
\newcommand{\skss}{\sk^{ss}}
\newcommand{\skps}{\sk^{ps}}
\newcommand{\wskss}{\widetilde{\sk}^{ss}}
\newcommand{\GL}{\mathrm{GL}}
\newcommand{\GLsix}{\GL_6 (\C )}
\newcommand{\cubicsexplicit}{\P\bigl(H^0\bigl(\OO_{\P^4}(3)\bigr)\bigr)}
\newcommand{\cubics}{\mathcal{C}}
\newcommand{\ring}{R}
\begin{document}

\title[Matrix factorizations and intermediate Jacobians]{Matrix factorizations and intermediate Jacobians of cubic threefolds}

\author[B\"ohning]{Christian B\"ohning}\thanks{The first author was supported by the EPSRC New Horizons Grant EP/V047299/1.}
\address{Christian B\"ohning, Mathematics Institute, University of Warwick\\
Coventry CV4 7AL, England}
\email{C.Boehning@warwick.ac.uk}

\author[von Bothmer]{Hans-Christian Graf von Bothmer}
\address{Hans-Christian Graf von Bothmer, Fachbereich Mathematik der Universit\"at Hamburg\\
Bundesstra\ss e 55\\
20146 Hamburg, Germany}
\email{hans.christian.v.bothmer@uni-hamburg.de}

\author[Buhr]{Lukas Buhr}
\address{Lukas Buhr, Institut f\"ur Mathematik\\
Johannes Gutenberg-Universit\"at Mainz\\
Staudingerweg 9\\
55128 Mainz, Germany}
\email{lubuhr@uni-mainz.de}

\date{\today}


\begin{abstract}
Results due to Druel and Beauville show that the blowup of the intermediate Jacobian of a smooth cubic threefold $X$ in the Fano surface of lines can be identified with a moduli space of semistable sheaves of Chern classes $c_1=0, c_2=2, c_3=0$ on $X$. Here we further identify this space with a space of matrix factorizations. This has the advantage that this description naturally generalizes to singular and even reducible cubic threefolds. In this way, given a degeneration of $X$ to a reducible cubic threefold $X_0$, we obtain an associated degeneration of the above moduli spaces of semistable sheaves. 
\end{abstract}

\maketitle

\section{Introduction and recollection about moduli spaces of skew-symmetric $6\times 6$ matrices of linear form}\label{sClassificationSkewSemistable}

Pfaffian representations of cubic threefolds are a classical subject, cf. e.g. Beauville \cite{Beau00}, \cite{Beau02}, Comaschi \cite{Co20}, \cite{Co21}, Manivel-Mezzetti \cite{MaMe05}, Iliev-Markushevich \cite{IM00}; in \cite{BB22-2} we introduced a fibration of algebraic varieties $\wMM \to \cubics$, where $\cubics =\cubicsexplicit$ is the space of cubic hypersurfaces in $\P^4$, which may be viewed as the \emph{universal family of $G$-equivalence classes of Pfaffian representations of cubic threefolds}. Moreover, the fibres are projective. Let us recall how to obtain $\wMM$: we let $\ring$ be the graded polynomial ring over $\C$ in variables $x_0, \dots, x_4$ of weight $1$, and let $\sk = \P \bigl ( (\C^5)^{\vee} \otimes \Lambda^2 \C^6 \bigr)$ be the projective space of skew-symmetric $6\times 6$-matrices with entries linear forms on $\P^4$. This has a natural action by $G=\GLsix$ given by $M\mapsto AMA^t$ for $A\in \GLsix$ and $M\in (\C^5)^{\vee} \otimes \Lambda^2 \C^6$. 
Then
\[
\MM := \skss //G
\]
is the good projective categorical quotient of the locus of semistable points in $\sk$ by this action. Let
\[
\pi \colon \skss \to \MM
\]
be the canonical projection. We define the subset $\skps \subset \skss$ to be the subset of those points whose orbits in $\skss$ are closed. Then look at the incidence correspondence 
\[
\mathcal{T}=\bigl\{  ([M], [F]) \mid \Pf (M) \in (F)\bigr\} \subset \skss \times \cubics 
\]
with its two projections 
\[
\pi_1 \colon \mathcal{T} \to  \skss, \quad \pi_2 \colon \mathcal{T} \to \cubics .
\]

We also denote by $\skss_0 \subset \skss$ the subset consisting of matrices with Pfaffian zero, and let 
\[
\mathcal{T}_0= \pi_1^{-1}(\skss_0) =\skss_0 \times \cubics .
\]
Clearly, $\pi_1$ is one-to-one onto its image outside of the subset $\mathcal{T}_0 \subset \mathcal{T}$ and we denote by $\wskss$ the closure of $\pi_1^{-1} (\skss - \skss_0)$ in $\mathcal{T}$. 

Then the group $G$ acts on $\sk \times \cubics$ if we let it act trivially on $\cubics$ and $\skss \times \cubics$ is the locus of semistable points for this action, $\wskss$ is a $G$-invariant  irreducible closed subset of $\skss \times \cubics$ and the good categorical quotient $(\skss \times \cubics)//G$ is nothing but $\MM \times \cubics$. Then $\wskss$ maps to an irreducible closed subset of $\MM \times \cubics$, which we denote by $\wMM$. It comes with a natural projection $\wMM \to \cubics$.

\medskip

The main results we will show in this article are the following.

\begin{enumerate}
\item
We define a space $\mf$ of certain matrix factorisations, which we call \emph{matrix factorisations of intermediate Jacobian type}. This has an action by a non-reductive group $\Gamma$. Since we were not able to use general methods provided by currently available versions of non-reductive GIT to this example, we define ad hoc the loci of semistable and polystable points, $\mf^{ss}, \mf^{ps}$ in $\mf$, and show that there is a natural morphism $\mf^{ps} \to \wMM$ each nonempty fibre of which is a single $\Gamma$-orbit. Indeed, in a forthcoming paper we will show this morphism is also surjective. In this sense, $\wMM$ can be interpreted as a moduli space of matrix factorisations of intermediate Jacobian type.
\item
For smooth $X$ in $\cubics$, we will show that the fibre $\wMM_X$ of $\wMM\to\cubics$ over $X$ admits a natural bijective morphism onto 
Druel's compactification $\bMM_X$ of the moduli space of stable rank 2 vector bundles with $c_1=0$ and $c_2=2$ on $X$ \cite{Dru00}, \cite{Beau02}, the Maruyama-Druel-Beauville moduli space of equivalence classes of semistable sheaves on $X$ with Chern classes $c_1=0, c_2=2, c_3=0$. Since it is known that $\bMM_X$ is smooth, hence in particular normal, it then follows from Zariski's main theorem that the morphism from $\wMM_X$ is an isomorphism. We mention that $\bMM_X$ is also isomorphic to the intermediate Jacobian of $X$ blown up in the Fano surface of lines \cite{Beau02}. Thus our constructions here allow us to study degenerations of these (birational models of) intermediate Jacobians along with the cubic. Our initial motivation for the present work is that this may help to shed some light on unsolved cycle-theoretic questions about these intermediate Jacobians, such as the representability by algebraic cycles of certain minimal cohomology classes which is intimately linked to the question of stable rationality for smooth cubic threefolds \cite{Voi17}.
\end{enumerate}

\

Lastly, for the reader's convenience, we restate here two results already appearing in \cite{BB22-2}, which we will make repeated use of below. 

\begin{table}
\begin{tabular}{|c|c|c|c| c|}
\hline
& $M$ & $S$ & $Y$ & \\
\hline
 (a)
 &
  $\left(
 \begin{smallmatrix}
       &{l}_{3}&&0&{l}_{0}&{l}_{1}\\
      {-{l}_{3}}&&&{-{l}_{0}}&0&{l}_{2}\\
      &&&{-{l}_{1}}&{-{l}_{2}}&0\\
      0&{l}_{0}&{l}_{1}&&{l}_{4}&\\
      {-{l}_{0}}&0&{l}_{2}&{-{l}_{4}}&&\\
      {-{l}_{1}}&{-{l}_{2}}&0&&&
\end{smallmatrix}
\right)$
&
$\left(
 \begin{smallmatrix}
      {l}_{2}&\\
      {-{l}_{1}}&\\
      {l}_{0}&{l}_{4}\\
      &{l}_{2}\\
      &{-{l}_{1}}\\
      {l}_{3}&{l}_{0}
\end{smallmatrix}
\right)$
&
a smooth conic
&
stable
\\ \hline
 (b)
 &
$\left(
\begin{smallmatrix}
      0&{l}_{0}&{l}_{1}\\
      {-{l}_{0}}&0&{l}_{2}\\
      {-{l}_{1}}&{-{l}_{2}}&0\\
      &&&0&{l}_{2}&{l}_{3}\\
      &&&{-{l}_{2}}&0&{l}_{4}\\
      &&&{-{l}_{3}}&{-{l}_{4}}&0
\end{smallmatrix}
\right)$
&
$\left(
\begin{smallmatrix}
      {l}_{2}&\\
      {-{l}_{1}}&\\
      {l}_{0}&\\
      &{l}_{4}\\
      &{-{l}_{3}}\\
      &{l}_{2}
\end{smallmatrix}
\right)$
 &
 two skew lines
 &
 stable
\\ \hline
 (c)
 &
$\left(
\begin{smallmatrix}
      0&{l}_{0}&{l}_{1}\\
      {-{l}_{0}}&0&{l}_{2}\\
      {-{l}_{1}}&{-{l}_{2}}& 0\\
      &&&0&{l}_{1}&{l}_{2}\\
      &&&{-{l}_{1}}&0&{l}_{3}\\
      &&&{-{l}_{2}}&{-{l}_{3}}&0
\end{smallmatrix}
\right)$
&
$\left(
\begin{smallmatrix}
      {l}_{2}&\\
      {-{l}_{1}}&\\
      {l}_{0}&\\
      &{l}_{3}\\
      &{-{l}_{2}}\\
      &{l}_{1}
\end{smallmatrix}
\right)$
&
 \begin{tabular}{c}
two distinct \\ intersecting lines\\
with an embedded point\\
at the intersection \\
spanning the ambient $\P^4$
\end{tabular}
&
stable
\\ \hline
 (d)
 &
$\left(
\begin{smallmatrix}
&  &  & 0 &l _0 &l_1 \\
 &  &  & -l_0 & 0 & l_2  \\
 &  &  & -l_1 & -l_2 & 0 \\
0 &l _0 &l_1 & & l_3 & l_4\\
-l_0 & 0 & l_2  & -l_3 &  & \\
-l_1 & -l_2 & 0 & -l_4 &  & 
\end{smallmatrix}
\right)$
&
$\left(
\begin{smallmatrix}
      {l}_{2}&\\
      {-{l}_{1}}& -l_4\\
      {l}_{0}&{l}_{3}\\
      &{l}_{2}\\
      &{-{l}_{1}}\\
      &{l}_{0}
\end{smallmatrix}
\right)$
 &
  \begin{tabular}{c}
  a double line lying on \\
a smooth quadric surface\\
\end{tabular}
&
\begin{tabular}{c}
strictly semistable,\\ but not polystable
\end{tabular}
\\ \hline
(e)
 &
$\left(
\begin{smallmatrix}
&  &  & 0 &l _0 &l_1 \\
 &  &  & -l_0 & 0 & l_2  \\
 &  &  & -l_1 & -l_2 & 0 \\
0 &l _0 &l_1 & & l_3 & \\
-l_0 & 0 & l_2  & -l_3 &  & \\
-l_1 & -l_2 & 0 &  &  & 
\end{smallmatrix}
\right)$
&
$\left(
\begin{smallmatrix}
      {l}_{2}&\\
      {-{l}_{1}}& \\
      {l}_{0}&{l}_{3}\\
      &{l}_{2}\\
      &{-{l}_{1}}\\
      &{l}_{0}
\end{smallmatrix}
\right)$
&
  \begin{tabular}{c}
a plane double line \\
with an embedded point,\\
spanning the ambient $\P^4$\\
\end{tabular}
&
\begin{tabular}{c}
strictly semistable,\\ but not polystable
\end{tabular}
\\ \hline
(f) 
&
$\left(
\begin{smallmatrix}
&  &  & 0 &l _0 &l_1 \\
 &  &  & -l_0 & 0 & l_2  \\
 &  &  & -l_1 & -l_2 & 0 \\
0 &l _0 &l_1 & &  & \\
-l_0 & 0 & l_2  &  &  & \\
-l_1 & -l_2 & 0 &  &  & 
\end{smallmatrix}
\right)$
&
$\left(
\begin{smallmatrix}
      {l}_{2}&\\
      {-{l}_{1}}& \\
      {l}_{0}&\\
      &{l}_{2}\\
      &{-{l}_{1}}\\
      &{l}_{0}
\end{smallmatrix}
\right)$
&
\begin{tabular}{c}
 a line \\
together with its \\
full first order\\
infinitesimal \\
neighbourhood
\end{tabular}
&
polystable
\\ \hline
\end{tabular}
\medskip

\caption{Semi-stable matrices $M$ with vanishing Pfaffian}
\label{tPfaffZero}
\end{table}

\begin{theorem}\label{tGeometryM0}
Let $[M]\in \skss$ have vanishing Pfaffian. View $M$ as a map of graded $\ring$-modules
\[
\ring (-1)^{6} \xrightarrow{M} \ring^6.
\]
Let $S$ be a matrix with columns representing a minimal system of generators of the kernel of this map $M$.  Let $Y$ be the rank at most two locus of $M$ with its scheme structure defined by the $4\times 4$ sub-Pfaffians. Then there exists independent linear forms $l_0,\dots,l_4$ and matrices $B \in \mathrm{GL_6}(\C)$ and $B'  \in \mathrm{GL_2}(\C)$ such that after making the replacements 
\begin{align*}
	M & \mapsto B^t M B \\
	S & \mapsto B^{-1} S B'
\end{align*}
we have one of the cases in Table \ref{tPfaffZero}. Moreover, the stability type of $M$ is as described in the last column of Table \ref{tPfaffZero}.
\end{theorem}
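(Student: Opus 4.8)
\emph{Strategy.} I would organise the proof as a normal-form analysis driven by the degeneracy locus $Y$. Regarding $M$ as a linear map $\C^5\to\Lambda^2\C^6$, the hypothesis $\Pf(M)=0$ says that the linear space $\Pi:=\P(\im M)\subseteq\P(\Lambda^2\C^6)$ lies in the Pfaffian cubic, whose singular locus is the Grassmannian $G(2,6)$ of rank-$\le 2$ bivectors; hence the generic rank of $M$ along $\P^4$ is at most $4$, and $Y$ — cut out in $\P^4$ by the $4\times4$ sub-Pfaffians — is essentially the intersection of $\Pi$ with $G(2,6)$. First I would discard the case of generic rank $\le 2$: then $M$ factors through the affine cone over $G(2,6)$, so $[M]$ lies in the nullcone and is unstable, hence does not occur. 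So $M$ has generic rank $4$. Next I would establish the structural facts that drive the classification: $Y$ is one-dimensional with $Y_{\mathrm{red}}$ of degree at most $2$; the number of independent linear forms occurring in $M$ is $1+\dim\Pi$; and $\ker M$ is minimally generated by exactly two linear columns, so that $S$ is $6\times 2$ with linear entries (this also rules out, e.g., a smooth conic or a pair of incident lines occurring with a span smaller than in the table). Running through the possibilities for $Y$ together with the span data — smooth conic; two skew lines; two incident lines with an embedded point; a double line on a smooth quadric surface; a plane double line with an embedded point; a line with its full first-order neighbourhood — yields exactly the six rows of Table~\ref{tPfaffZero}.

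\emph{Reconstructing $M$.} For each configuration of $Y$ I would pick a basis $l_0,\dots,l_4$ of linear forms adapted to $Y$ (so that $Y$ is defined by the obvious linear forms and Plücker-type quadrics in these coordinates) and then solve for $M$. Once the coordinates are fixed, the conditions — $M$ skew-symmetric with linear entries, $\Pf(M)=0$, the $4\times4$ sub-Pfaffians cutting out precisely the chosen $Y$, and $MS=0$ for a chosen basis $S$ of the two-dimensional space of linear syzygies — are linear-algebraic in the coefficients of $M$. Solving them and using the residual freedom $M\mapsto B^tMB$, $S\mapsto B^{-1}SB'$ with $B\in\GL_6(\C)$, $B'\in\GL_2(\C)$ to normalise reproduces the displayed matrices one row at a time; the compatibility that makes the system solvable is that $\Pfad(M)\in\ker M$, hence is a linear combination of the columns of $S$.

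\emph{Stability types.} For each normal form I would run the Hilbert--Mumford numerical criterion for the action $M\mapsto AMA^t$ of $G=\GLsix$ on $\sk$. After conjugating a destabilising one-parameter subgroup to diagonal form $\lambda(t)=\mathrm{diag}(t^{a_1},\dots,t^{a_6})$ with $\sum a_i=0$, the quantity $\mu([M],\lambda)$ is read off from $\min\{a_i+a_j : M_{ij}\neq 0\}$, so testing stability becomes a small combinatorial optimisation over weight vectors. For (a), (b), (c) — where $Y$ spans $\P^4$ — one finds $\mu([M],\lambda)>0$ for every nontrivial $\lambda$ and a finite stabiliser: stable. For (d) and (e) one exhibits a $\lambda$ with $\mu([M],\lambda)=0$ whose limit $\lim_{t\to0}\lambda(t)\cdot M$ has the block shape of (f) and is not $G$-equivalent to $M$, so the orbit is not closed: strictly semistable but not polystable. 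For (f) one shows the orbit is closed — it is the common degeneration of (d) and (e), i.e.\ the polystable representative of their $S$-equivalence class — so $M$ is polystable; its stabiliser is positive-dimensional, so it is not stable. This matches the last column of the table.

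\emph{Main obstacle.} The real content is the first step: proving that the pairs $(Y,\ker M)$ above are the only ones that arise and that each determines $M$ up to the group action — that is, that the classification is genuinely finite with no hidden moduli. The delicate bookkeeping is around the non-reduced and embedded-point cases: showing that case (d) and case (e) are inequivalent and together exhaust the double-line possibilities, and that no conic or pair of incident lines can occur spanning a smaller linear space. The reconstruction of $M$ and the stability computations, though they involve several cases, are routine once this is in place.
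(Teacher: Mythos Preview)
The paper does not actually prove this theorem: it is introduced with the sentence ``Lastly, for the reader's convenience, we restate here two results already appearing in \cite{BB22-2}, which we will make repeated use of below,'' and is stated without proof as an imported classification result. Consequently there is no proof in the present paper against which your proposal can be compared.

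That said, your outline is broadly in the right spirit for how such a classification is obtained (the companion paper \cite{BB22-1} is devoted precisely to formats of $6\times6$ skew linear matrices with vanishing Pfaffian), but several steps you pass over are genuinely nontrivial. First, the assertion that generic rank $\le 2$ forces $[M]$ into the nullcone is not immediate: a $4$-dimensional linear space of rank-$2$ skew forms certainly has a common isotropic line or a common kernel vector, but you should make explicit which $1$-parameter subgroup witnesses instability. Second, your claim that $\ker M$ is always minimally generated by exactly two linear columns, and that this alone pins down the shape of $S$, is the crux of the argument and needs a real proof (for generic rank $4$ the Pfaffian adjoint has rank $2$ generically and its columns lie in $\ker M$, but you must show there are no further generators and that the two columns are independent). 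Third, the passage from ``$Y$ has such-and-such geometry'' to ``$M$ is uniquely determined up to $G$'' is exactly the hard part you flag at the end; in practice this is where the actual classification work lives, and a proof by ``solving linear conditions and normalising'' hides a substantial case analysis that must also rule out configurations not appearing in the table. Your Hilbert--Mumford discussion for the stability column is reasonable and matches how one would expect to verify the last column once the normal forms are in hand.
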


\begin{proposition}\label{pSameInformation}
Let $M$ and $S$ be matrices as in Table \ref{tPfaffZero}. Then $M$ represents the syzygy module of $S^t$. If $M'$ is a another skew symmetric $6 \times 6$ matrix with linear forms representing the syzygy module of $S^t$ then 
$[M']$ is in the $G$-orbit of $[M]$.  Furthermore the ideal generated by the $2\times 2$ minors of $S$ is equal to the one generated by the $4 \times 4$ Pfaffians of $M$. 
\end{proposition}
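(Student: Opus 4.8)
The plan is to realise $M$ as the middle differential of the minimal graded free resolution of $N:=\coker(S^t)$ (I suppress the graded twists throughout) and to read off the three assertions from this. Since the columns of $S$ generate $\ker M$ (Theorem~\ref{tGeometryM0}) one has $MS=0$; transposing and using $M^{t}=-M$ gives $S^{t}M=0$ as well, so there is a complex
\[
\mathbb F\colon\qquad 0\longrightarrow \ring^{2}\xrightarrow{\ S\ }\ring^{6}\xrightarrow{\ M\ }\ring^{6}\xrightarrow{\ S^{t}\ }\ring^{2},
\]
which, $M$ being skew-symmetric and all three matrices having linear entries, is, up to a twist, carried to itself by $\Hom_{\ring}(-,\ring)$. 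To prove assertion~1 I would show $\mathbb F$ is acyclic, i.e.\ a resolution of $N$, by the Buchsbaum--Eisenbud exactness criterion: the rank equalities hold because $\Pf(M)=0$ together with $Y\subsetneq\P^{4}$ forces $\rk M=4$ generically while $\rk S=2$ generically (visible from Table~\ref{tPfaffZero}); the two nontrivial depth conditions reduce, $\ring$ being Cohen--Macaulay, to $\operatorname{codim}I_{4}(M)\ge 2$ and $\operatorname{codim}I_{2}(S)\ge 3$, and both $V(I_{4}(M))$ and $V(I_{2}(S))$ equal the curve $Y=V(\Pf_{4}(M))$ of codimension $3$ — the first because over a field the rank of a skew-symmetric matrix is $<4$ exactly where its $4\times 4$ sub-Pfaffians vanish, the second being assertion~3, proved below. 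Acyclicity yields $\ker S^{t}=\im M$, and minimality is automatic since every entry of $M$ lies in the maximal ideal; hence the columns of $M$ are a minimal generating set of the syzygy module of $S^{t}$.

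For assertion~2, let $M'$ be another skew-symmetric matrix of linear forms whose columns minimally generate the syzygy module of $S^{t}$. Then $M'$ is the middle differential of a minimal graded free resolution of $N$ with the same last two terms $\ring^{6}\xrightarrow{S^{t}}\ring^{2}\to N$ as $\mathbb F$. Uniqueness of minimal graded free resolutions gives an isomorphism of complexes lifting $\operatorname{id}_{N}$, and a degree count together with minimality ($S^{t}$ and $M$ have entries in the maximal ideal) forces the comparison maps on $\ring^{2}$ and on the last $\ring^{6}$ to be the identity; the remaining one is some $A\in\GLsix$ with $M=M'A$, so $M'=MC$ with $C:=A^{-1}$. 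Skew-symmetry of $M$ and $M'$ gives $C^{t}M=MC$, hence $(C^{t})^{k}M=MC^{k}$ for all $k\ge 0$ and therefore $p(C^{t})M=Mp(C)$ for every polynomial $p$. Choosing $p$ with $D:=p(C)$ a square root of $C$ (possible since $C\in\GLsix$) we get $D^{t}M=MD$, whence $D^{t}MD=MD^{2}=MC=M'$, so $[M']$ lies in the $G$-orbit of $[M]$.

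For assertion~3 I would argue by Plücker duality. Fix a $4$-element subset $I\subset\{1,\dots,6\}$ with complement $I^{c}$. At a general point $p$ the matrix $M(p)$ has rank $4$, so $\ker M(p)=\im M(p)^{\perp}$; since $MS=0$ and the two columns of $S(p)$ are independent for general $p$, they span this $2$-plane. Writing $M(p)=TJT^{t}$ with $T$ a $6\times 4$ matrix whose columns span $\im M(p)$ and $J$ a standard symplectic form, one has $\Pf\bigl(M(p)_{I}\bigr)=\pm\det(T_{I})$, so the $4\times 4$ sub-Pfaffians of $M$ are, up to sign and an $I$-independent scalar, the Plücker coordinates of $\im M(p)$; by the Plücker complement relation these agree, up to sign and an $I$-independent scalar, with the $2\times 2$ minors $\det\bigl(S(p)_{I^{c}}\bigr)$ of the matrix whose columns span $\im M(p)^{\perp}$. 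Consequently there is a single rational function $\nu$ with $\Pf(M_{I})=\nu\,\varepsilon_{I}\det(S_{I^{c}})$ for all $I$, with $\varepsilon_{I}=\pm 1$; since the $\det(S_{I^{c}})$ have no common factor (clear from Table~\ref{tPfaffZero}) $\nu$ is a polynomial, and comparing degrees ($\Pf(M_{I})$ and $\det(S_{I^{c}})$ are both quadratic) $\nu$ is a nonzero constant. Hence $\Pf_{4}(M)=I_{2}(S)$, which is assertion~3 and also justifies the codimension claim used above.

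The main obstacle is the interplay of assertions~1 and~3: the Buchsbaum--Eisenbud depth estimate needs the \emph{scheme-theoretic} equality $V(I_{2}(S))=Y$, not just the set-theoretic one, so assertion~3 really has to be established first, and the Plücker-complement computation is what makes this uniform, avoiding a separate check of the six rows of Table~\ref{tPfaffZero}. A secondary point needing care is the bookkeeping of graded twists showing that $\mathbb F$ is genuinely self-dual, since that is what lets one conclude so cleanly in the uniqueness argument that the comparison maps are identities.
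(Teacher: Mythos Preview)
The paper does not actually prove Proposition~\ref{pSameInformation}: it is restated from \cite{BB22-2} ``for the reader's convenience'' and used as a black box, so there is no proof in this document against which to compare. Your argument is nonetheless correct, and one of its ingredients coincides with material that \emph{is} in the paper: the square-root trick you use for assertion~2 --- writing $M'=MC$, deducing $C^{t}M=MC$, then choosing $D=p(C)$ with $D^{2}=C$ so that $D^{t}MD=MC=M'$ --- is exactly Lemma~\ref{lAppendixGeneralSkew} in the Appendix, proved there by the same nilpotent-plus-scalar binomial expansion (applied after reduction to Jordan form).

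Two small points are worth tightening. In the Pl\"ucker-complement argument for assertion~3 you implicitly use that $\im S(p)=\ker M(p)$ at a general point; this follows because both are $2$-dimensional and $MS=0$, but you should say why $S(p)$ has rank~$2$ generically (immediate from the explicit entries in Table~\ref{tPfaffZero}). In the uniqueness step, the claim that the comparison maps on $\ring^{2}$ and on the last $\ring^{6}$ can be taken to be the identity is right but not automatic from ``lifting $\id_{N}$'': one argues that a degree-zero lift of $\id_{N}$ on $\ring^{2}$ differs from $\id$ by a constant matrix landing in $\im S^{t}\subset\mathfrak m\cdot\ring^{2}$, hence is $\id$; and then the constant map on $\ring^{6}$ differs from $\id$ by a constant matrix with columns in $\ker S^{t}$, which contains no nonzero constants by minimality.
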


\section{Matrix factorizations of intermediate Jacobian type}\label{sMatrixFactorizations}

In this section we identify $\wMM$ with a space of matrix factorizations. 

\medskip

Consider the following three projective spaces: 
\begin{enumerate}
\item
the projective space $\P_f= \cubics$ parametrising cubics in $\P^4$; for a nonzero homogeneous cubic polynomial $F$ in $x_0, \dots , x_4$, we will denote by $[F]$ the corresponding element of $\P_f$. 
\item
the projective space $\P_a$ parametrising skew-symmetric maps 
\[
2 \OO_{\P^4}(1)  \oplus 6 \OO_{\P^4} (2) 
	\xrightarrow{A} 
	2 \OO_{\P^4} (4)\oplus 6\OO_{\P^4}(3) 
\]
up to homotheties. We will write $[A]$ for the class of such a map $A$ in $\P_a$. We use the notation 
\[
	A = \begin{pmatrix}
		A_3 & A_2 \\
		-A_2^t & A_1
		\end{pmatrix}
\]
where the subscript denotes the homogeneous degree of the entries of each matrix;
\item
the projective space $\P_b$ parametrising skew-symmetric maps 
\[
 2 \OO_{\P^4} (1) \oplus 6 \OO_{\P^4}
	\xrightarrow{B} 
	2 \OO_{\P^4}(1)  \oplus 6 \OO_{\P^4} (2) 
\]
up to homotheties. Again we write $[B]$ for the class of such a map $B$ in $\P_b$ and 
\[
B = \begin{pmatrix}
		B_0 & B_1^t\\
		-B_1 & B_2
		\end{pmatrix}. 
\]
\end{enumerate}

\begin{definition}\label{dMFIntermediateJacobian}
Let $\mf$ be the space of triples $([A],[B],[F])\in \P_a\times \P_b\times\P_f$ such that the following hold:
\begin{enumerate}
\item
$AB\neq 0$, $BA \neq 0$, $\Pf(A)\neq 0$, $\Pf (B)\neq 0$. 
\item
$[AB] =[F\cdot \mathrm{id}]$ and $[BA]= [F\cdot \mathrm{id}]$, where the equalities (and identity maps) are to be understood in the appropriate projective spaces. \
\item
$[\Pf (A)]=[\Pf(B)] =[F^2]$. 
\end{enumerate} 
Note that condition $a)$ defines an open subvariety of $\P_a\times \P_b\times\P_f$, and $b)$ and $c)$ define a closed subvariety inside this. Hence $\mf$ is a locally closed subvariety of $\P_a\times \P_b\times\P_f$. We call elements in $\mf$ {\sl matrix factorizations of intermediate Jacobian type}.
\end{definition}

\begin{definition}\label{dGroupActionMF}
Consider the (non-reductive) group of automorphism $\Gamma$ of the graded free bundle $2 \OO_{\P^4}(1) \oplus 6 \OO_{\P^4}$. $\Gamma$ acts on $\mf$ by the rule
\[
\gamma \cdot ([A],[B],[F]) = \bigl( [\gamma A \gamma^t], [(\gamma^t)^{-1}B \gamma^{-1}], [F] \bigr). 
\]

The situation is summarised in the following commutative diagram
\xycenter{
	2 \OO_{\P^4} (1) \oplus 6 \OO_{\P^4} 
		\ar[r]^-B \ar[d]^{\gamma}
	&
	2 \OO_{\P^4}(1) \oplus 6 \OO_{\P^4} (2) 
	\ar[r]^-A
	\ar[d]^{(\gamma^{-1})^t}
	&
	2 \OO_{\P^4} (4) \oplus 6\OO_{\P^4}(3) 
	\ar[d]^{\gamma}
	\\
	2 \OO_{\P^4} (1) \oplus 6 \OO_{\P^4} 
	\ar[r]^-{B'} 
	&
	2 \OO_{\P^4}(1) \oplus 6 \OO_{\P^4} (2) 
	\ar[r]^-{A'}
	&
	2 \OO_{\P^4} (4) \oplus 6\OO_{\P^4}(3) 
	}
\end{definition}

\begin{definition}\label{dMFStabilityProp}
We denote by $\mf^{ss}$ and $\mf^{ps}$ the loci inside $\mf$ where $[A_1]$ is semistable and polystable, respectively.
\end{definition}

\begin{lemma}\label{lMFNormal}
Let $([A], [B], [F])\in \mf$. Then 
\[
\Pf (A_1)\in (F). 
\]
More precisely:
\begin{enumerate}
\item[(A)]
 If $B_0 \not= 0$ then, $([A],[B],[F])$ is in the same $\Gamma$-orbit as a matrix factorization $([A'],[B]',[F])$ with
\[
	A' = \begin{pmatrix}
			0 & F & \\
			-F & 0 & \\
			&& A'_1
		\end{pmatrix}
\text{ and }
	B' = \begin{pmatrix}
			0 & 1 & \\
			-1 & 0 & \\
			&& \Pfad {A'_1}
		\end{pmatrix}
\]
where $[A_1']$ is in the same $G$-orbit as $[A_1]$. 
In this case, $(\Pf (A_1))= (F)$.  
\item[(B)]
If $B_0=0$, then $\Pf (A_1)=0$. Moreover, if $A_1$ is semistable, then $([A],[B],[F])$ is in the same $\Gamma$-orbit as a matrix factorization $([A'],[B'],[F'])$ with 
$A'_1 = M$ and $B'_1 = S$ where $M$ and $S$ are as in one of the cases in Table \ref{tPfaffZero}.

\end{enumerate}
In particular, $\Pf (A_1) =0$ if and only if $B_0=0$.
\end{lemma}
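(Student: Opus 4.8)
The plan is to expand the defining relations $[AB]=[BA]=[F\cdot\id]$ and $[\Pf(A)]=[\Pf(B)]=[F^2]$ of Definition~\ref{dMFIntermediateJacobian} into block equations for the decompositions $A=\left(\begin{smallmatrix}A_3&A_2\\-A_2^t&A_1\end{smallmatrix}\right)$ and $B=\left(\begin{smallmatrix}B_0&B_1^t\\-B_1&B_2\end{smallmatrix}\right)$, and then to split according to the constant skew-symmetric $2\times2$ block $B_0$, which (being a scalar skew $2\times2$ matrix) is either $0$ or invertible. After rescaling homotheties we may assume $AB=\lambda F\cdot\id=BA$ with one and the same $\lambda\in\C^*$ (equality of the two scalars follows from $A(BA)=(AB)A$); comparing the top-left and bottom-left blocks of $AB=\lambda F\cdot\id$ gives in particular
\[
A_3B_0-A_2B_1=\lambda F\,I_2,\qquad A_2^tB_0+A_1B_1=0 .
\]

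\emph{Case $B_0\neq 0$.} Here I would perform the classical symplectic completion of the square: the substitution replacing the second block of rows and columns of $B$ by itself minus $B_0^{-1}B_1^t$ times the first brings $B$ to block-diagonal form $\mathrm{diag}(B_0,B_2')$, and since $B_0^{-1}B_1^t$ has entries linear forms this substitution is realized by a unipotent element $\gamma$ of $\Gamma$; one has to check that the congruence $B\mapsto(\gamma^{-1})^tB\gamma^{-1}$ of Definition~\ref{dGroupActionMF}, once the twists of the three bundles are unwound into matrices of the appropriate degrees, indeed does this. Substituting the block-diagonal $B$ back into $AB=\lambda F\cdot\id$ then forces the transformed $A$ to be block-diagonal as well: the bottom-left block gives $(A_2')^tB_0=0$, hence $A_2'=0$; the top-left block gives $A_3'=\lambda F\,B_0^{-1}$, a nonzero constant skew matrix times $F$; and the bottom-right block gives $A_1'B_2'=\lambda F\,I_6$, with $A_1'$ in the $G$-orbit of $A_1$ (only the $\GL_6(\C)$-part of the applied $\gamma$ affects it). Now multiplicativity of the Pfaffian on block-diagonal skew matrices gives $\Pf(A')=\Pf(A_3')\cdot\Pf(A_1')$, a nonzero constant multiple of $F\cdot\Pf(A_1')$, while on the other hand $\Pf(A')=\det(\gamma)\Pf(A)$ is a nonzero constant multiple of $F^2$; comparing, $\Pf(A_1')$ is a nonzero constant multiple of $F$, so $(\Pf(A_1))=(\Pf(A_1'))=(F)$. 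Finally $A_1'B_2'=\lambda F\,I_6$ together with $A_1'\cdot\Pfad{A_1'}=\Pf(A_1')\,I_6$ forces $B_2'$ to be a constant multiple of $\Pfad{A_1'}$, and absorbing the remaining scalars into the $\GL_2(\C)\times\GL_6(\C)$-part of $\Gamma$ and the homotheties of $[A]$ and $[B]$ puts the triple in the stated normal form.

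\emph{Case $B_0=0$.} Then the second displayed block equation collapses to $A_1B_1=0$ and the first to $A_2B_1=-\lambda F\,I_2$. If $B_1$ vanished the latter would read $0=-\lambda F\,I_2$, impossible, so $B_1\neq 0$; hence $A_1$, viewed over $\Frac(\ring)$, annihilates the nonzero vector given by a non-zero column of $B_1$, so $\det(A_1)=0$ in $\ring$, whence $\Pf(A_1)^2=\det(A_1)=0$ and $\Pf(A_1)=0$. For the ``moreover'' part, assume $A_1$ is semistable. Using the $\GL_6(\C)$-part of $\Gamma$ together with Theorem~\ref{tGeometryM0}, conjugate $A_1$ into one of the table forms $M$; this replaces $B_1$ by $(h^t)^{-1}B_1$, whose two columns are linear and still lie in $\ker(M)$, hence — as $\ker(M)$ is minimally generated by the two \emph{linear} columns of the matrix $S$ attached to $M$ in Table~\ref{tPfaffZero} — in their $\C$-span, so the new $B_1$ equals $SC$ for a constant $2\times2$ matrix $C$. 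The relation $A_2B_1=-\lambda F\,I_2$ persists through this step (since $B_0$ stays $0$) and shows the new $B_1$ has rank $2$ over $\Frac(\ring)$, so $C$ is invertible; absorbing $C$ by the $\GL_2(\C)$-part of $\Gamma$ — which acts on $B_1$ by right multiplication and leaves $A_1$ unchanged — replaces $B_1$ by $S$, landing us in a triple with $A_1'=M$, $B_1'=S$.

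Putting the two cases together, $\Pf(A_1)$ equals $0$ when $B_0=0$ and a nonzero multiple of $F$ when $B_0\neq 0$; hence $\Pf(A_1)\in(F)$ unconditionally and $\Pf(A_1)=0$ if and only if $B_0=0$, which are the two displayed conclusions of the lemma. The step I expect to be the real obstacle is the first half of Case $B_0\neq 0$: keeping track of how the abstract $\Gamma$-action acts on the block pieces of $A$ and of $B$ — $\gamma$ must be read at once as an automorphism of the three differently twisted bundles $2\OO_{\P^4}(1)\oplus 6\OO_{\P^4}$, $2\OO_{\P^4}(1)\oplus 6\OO_{\P^4}(2)$ and $2\OO_{\P^4}(4)\oplus 6\OO_{\P^4}(3)$ — and verifying both that the completion of the square on $B$ is realized inside $\Gamma$ and that it drags $A$ into the asserted block-diagonal shape. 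Everything after that is routine linear algebra of skew-symmetric matrices and their Pfaffians.
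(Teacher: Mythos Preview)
Your proposal is correct and follows essentially the same route as the paper's proof: in Case~(A) you use a unipotent element of $\Gamma$ to kill $B_1$ (the paper phrases this as ``there exists a $\Gamma$-translate with $B_1'=0$''), read off $A_2'=0$ and $A_3'\propto\left(\begin{smallmatrix}0&F\\-F&0\end{smallmatrix}\right)$ from the block equations, and then use multiplicativity of the Pfaffian and the relation $A_1'\Pfad(A_1')=\Pf(A_1')\,I_6$ to identify $B_2'$; in Case~(B) you use $A_1B_1=0$ to get $\Pf(A_1)=0$, normalize $A_1$ to a table matrix $M$ via $\GL_6(\C)\subset\Gamma$, and then absorb the constant matrix $C$ with $B_1=SC$ via $\GL_2(\C)\subset\Gamma$. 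The only small difference is that you argue $B_1\neq 0$ and $\mathrm{rk}\,C=2$ from the block equation $A_2B_1=-\lambda F\,I_2$, whereas the paper deduces both from $\Pf(B)\neq 0$; either works.
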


\begin{proof}

\fbox{Case (A):} If $B_0$ is nonzero, it is invertible since it is skew. Therefore there exists a $\Gamma$-translate $([A'], [B'], [F])\in \mf$ such that $A_1=A_1'$ and 
\[
	B_0' = \begin{pmatrix} 0 & 1 \\ -1 & 0 \end{pmatrix}
\]
and $B_1' = 0$. But then
\[
	[F \cdot \id ]
	= [A' B']  
	= \left[ \begin{pmatrix}
		A_3' & A_2' \\
		-(A_2')^t & A_1'
		\end{pmatrix}
	\begin{pmatrix}
		B_0' & 0\\
		0 & B_2'
		\end{pmatrix} \right].
\]
In particular $(A_2')^t B_0' = 0$. Since $B_0'$ is invertible, it follows that $A_2' = 0$. So
\[
	[F \cdot \id] = [A' B' ]
	= \left[ \begin{pmatrix}
		A_3'B_0'&  0\\
		0 & A_1'B_2'
		\end{pmatrix}\right].
\]
This implies
\[
	[A_3' ]= \left[\begin{pmatrix}0 & F \\ -F & 0 \end{pmatrix}\right]
\]
and since $[\Pf (A')]= [F^2]$ by assumption, we must consequently have $[\Pf (A_1)]= [\Pf (A_1')]= [F]$ in this case. 

We have $[A_1'B_2']= [F\cdot \mathrm{id}_{6\times 6}]$. Moreover, $[\Pf (A_1')] = [F]$, $[\Pf (B_2')] = [\Pf (B)] = [F^2]$ and then $B_2'$ must be proportional to the Pfaffian adjoint of $A_1'$. 

In summary, what we have achieved at this point is reduction of $A$ and $B$ to
\[
	A'= \begin{pmatrix}
			0 & \lambda F & \\
			-\lambda F & 0 & \\
			&& A'_1
		\end{pmatrix}
\text{ and }
	B'= \begin{pmatrix}
			0 & 1 & \\
			-1 & 0 & \\
			&& \mu\Pfad {A'_1}
		\end{pmatrix}
\]
with $\lambda, \mu \in \C^*$. Since $A'B'$ is a multiple of $F$, we have $\lambda=\mu$. Now acting with the element $\gamma \in \Gamma$ given by
\[
\gamma =\mathrm{diag} \left( (\sqrt{\lambda})^{-1}, (\sqrt{\lambda})^{-1}, 1, 1, 1, 1, 1, 1 \right)
\]
and noting that $[B'] = [\lambda B']$, we obtain $A'$ and $B'$ as claimed in part (A) of the Lemma (treating $A'$ and $B'$ as dynamical variables in this last step).

\medskip

\fbox{Case (B):} If $B_0=0$, we have $A_1B_1 = 0$ since $[AB] =[F\cdot \mathrm{id}]$. Since $B_1 \not=0$ (otherwise we would have $\Pf(B) = 0$ contradicting $[\Pf (B)] =[F^2]$) the skew symmetric $6 \times 6$ matrix $A_1$ can not have generic rank $6$. Therefore $\Pf(A_1) = 0$.

If we assume in addition that $A_1$ is semi-stable, we can appeal to the classification in Table \ref{tPfaffZero} and conclude that, after acting by an element in $\Gamma$, we can assume $A_1 = M$ with $M$ as in that table. Let $S$ be the matrix associated to $M$ in Table \ref{tPfaffZero}. Since the syzygy module of $M=A_1$ is generated by the columns of $S$ and since $A_1B_1 = 0$ the columns of $B_1$ must be generated by the columns of $S$. The columns of $B_1$ cannot be dependent since $\Pf(B) \not=0$. Since the entries of $B_1$ and $S$ are linear and both matrices have two columns this implies that we may assume $B_1 = S$ after acting with another element in $\Gamma$. 
\end{proof}

\begin{lemma}\label{lWellDefined}
Suppose that $([A],[B],[F]) \in \mf^{ss}$ and $\Pf(A_1) = 0$. 
\begin{enumerate}
\item
We have that $X=V(F)$ contains the rank $\le 2$ locus of $A_1$ set-theoretically.
\item
If $A_1$ is of types (d) or (e) in Table \ref{tPfaffZero} and let $Y$ be the subscheme defined by the $4\times 4$ sub-Pfaffians of $A_1$, we can assert something stronger: in that case $X=V(F)$ scheme-theoretically contains the unique degree $2$ curve contained in $Y$.
\end{enumerate}
\end{lemma}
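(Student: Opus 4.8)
The plan is to reduce to the explicit normal forms of Table~\ref{tPfaffZero} and to read off from the matrix–factorization identity $[AB]=[F\cdot\mathrm{id}]$ precisely the relations between $F$, $A_1$ and $B_1$ that are needed. Since $\Pf(A_1)=0$ and $A_1$ is semistable (we are in $\mf^{ss}$), Lemma~\ref{lMFNormal}(B) lets us, after acting by $\Gamma$ (which changes neither $F$ nor the rank $\le2$ locus of $A_1$), assume $B_0=0$, $A_1=M$ and $B_1=S$ with $M,S$ as in one of the cases of Table~\ref{tPfaffZero}; by construction of $(M,S)$ (Theorem~\ref{tGeometryM0}) the columns of $S$ generate $\ker(M)$, and by Proposition~\ref{pSameInformation} the ideal $I$ generated by the $4\times4$ sub-Pfaffians of $M$ equals the ideal of $2\times2$ minors of $S$. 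As $AB\ne0$ and $F\ne0$ (Definition~\ref{dMFIntermediateJacobian}(a)), we may write $AB=cF\cdot\mathrm{id}$ with $c\in\C^{*}$; expanding the top-left $2\times2$ block, with $B_0=0$, gives
\[
A_2\,S=-cF\cdot\mathrm{id}_2 .
\]
Part~(1) is then immediate: at a point $p$ of the rank $\le2$ locus of $A_1$ one has $\rk S(p)\le1$ by Proposition~\ref{pSameInformation}, so $A_2(p)S(p)=-cF(p)\,\mathrm{id}_2$ has rank $\le\rk S(p)\le1$; since $-cF(p)\,\mathrm{id}_2$ has rank $0$ or $2$, this forces $F(p)=0$, i.e.\ $p\in X$.

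For part~(2) we are in case (d) or (e), where the reduced rank $\le2$ locus is a line $L=V(l_0,l_1,l_2)$ and the unique degree-$2$ curve $C\subseteq Y$ is the pure one-dimensional part of $Y$ (all of $Y$ in case (d); the plane double line in case (e)). In both cases $C$ is a divisor on a smooth surface --- a double line on a smooth quadric surface, resp.\ a plane double line --- hence Cohen--Macaulay of pure dimension one, so its saturated homogeneous ideal $I_C$ has $\mathrm{Ass}(R/I_C)=\{(l_0,l_1,l_2)\}$; moreover $C\subseteq Y=V(I)$ gives $I\subseteq I_C$. Expanding $A_2S=-cF\,\mathrm{id}_2$ using the explicit first column $(l_2,-l_1,l_0,0,0,0)^{t}$ of $S$ already shows $F\in(l_0,l_1,l_2)$, but this is weaker than $F\in I_C$; the additional input is the Pfaffian adjoint. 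Since $\Pf(M)=0$ we have $M\cdot\Pfad M=0$, so the columns of $\Pfad M$ lie in $\ker(M)$; as the columns of $S$ generate $\ker(M)$ and have linear entries while the columns of $\Pfad M$ have quadratic entries, a degree count gives $\Pfad M=S\cdot N$ for a $2\times6$ matrix $N$ of linear forms. Multiplying on the left by $A_2$ and using $A_2 S=-cF\,\mathrm{id}_2$,
\[
A_2\,\Pfad M=(A_2S)N=-cF\,N ,
\]
so $F\,N_{\alpha j}=-c^{-1}(A_2\,\Pfad M)_{\alpha j}\in I\subseteq I_C$ for all $\alpha,j$, because the entries of $A_2\,\Pfad M$ are $R$-linear combinations of the entries of $\Pfad M$, which are (up to sign) the $4\times4$ sub-Pfaffians of $M$ and generate $I$. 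A direct computation of the relevant $4\times4$ sub-Pfaffians in cases (d) and (e) shows $(\Pfad M)_{13}$ is a nonzero multiple of $l_2l_3$; since the first row of $S$ is $(l_2,0)$, $(\Pfad M)_{13}=(SN)_{13}=l_2\,N_{13}$, so $N_{13}$ is a nonzero multiple of $l_3$. As $l_0,\dots,l_4$ are independent, $l_3\notin(l_0,l_1,l_2)$, so $l_3$ is a nonzerodivisor on $R/I_C$, and from $F\,l_3\in I_C$ we conclude $F\in I_C$, as claimed.

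The pointwise rank count in part~(1) is routine; the main obstacle is part~(2). The crux is that pointwise vanishing of $F$ on the rank $\le2$ locus --- equivalently $F\in(l_0,l_1,l_2)$ --- is much weaker than the scheme-theoretic statement, which must instead be squeezed out of the matrix factorization: the clean device is $\Pfad M=S\,N$ together with $A_2\,\Pfad M=-cF\,N$, yielding $F\,N_{\alpha j}\in I\subseteq I_C$ for all $\alpha,j$. After that one still has to inspect the explicit normal forms (d) and (e) in order to locate among the entries of $N$ a linear form lying outside $(l_0,l_1,l_2)$, and to use that $C$ is Cohen--Macaulay so that this nonzerodivisor can be cancelled from $F\,l_3\in I_C$.
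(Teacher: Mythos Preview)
Your proof is correct, and both parts take a genuinely different route from the paper's.

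For part~(1), the paper argues directly on the full $8\times 8$ matrix $A$: if $A_1$ has rank $\le 2$ at $P$, then $A$ has rank $\le 6$ at $P$ (adding two rows and two columns can raise the rank by at most $4$), so $\det A(P)=0$, and since $[\Pf A]=[F^2]$ this gives $F(P)=0$. Your argument instead passes through the identity $A_2S=-cF\cdot\id_2$ and the equality of ideals in Proposition~\ref{pSameInformation}; both are short, but yours already sets up the machinery you reuse in part~(2).

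For part~(2) the difference is more substantial. The paper fixes the normal form, reads off the two scalar equations coming from the two columns of $S$, explicitly computes the syzygy module of the second column (writing out a generating matrix), and substitutes back to land $F$ in $(l_1,l_2,l_0^2)$ in case~(e), with a parallel computation in case~(d). You instead use the factorization $\Pfad M=SN$ (forced by $M\,\Pfad M=0$ and the minimality of $S$) together with $A_2\,\Pfad M=-cF\,N$ to get $F\cdot N_{\alpha j}\in I\subseteq I_C$, then exhibit the single entry $N_{13}=l_3$ via $(\Pfad M)_{13}=l_2l_3$ and the first row $(l_2,0)$ of $S$, and cancel $l_3$ using that $C$ is Cohen--Macaulay with unique associated prime $(l_0,l_1,l_2)$. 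Your route handles (d) and (e) uniformly with one $4\times 4$ Pfaffian and one commutative-algebra fact, whereas the paper's explicit syzygy computation is more transparent about exactly which relations between the $Q_{1j}$ force $F$ into $I_C$ but has to be carried out separately in each case.
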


\begin{proof}
For $a)$, suppose $A_1$ has rank at most $2$ at a point $P\in \P^4$. Observe quite generally that if $A_1$ has rank $r$, then $A$ has rank at most $r+4$ because $A$ can be obtained from $A_1$ by first expanding $A_1$ by two length $6$ columns to a $6\times 8$ matrix, and afterwards, adding two length $8$ rows to get $A$; and for each added row or column the rank can increase by at most $1$. 
Hence if $A_1$ has rank at most $2$, $A$ can never have full rank, hence its determinant, which is $F^2$, is zero at $P$, hence $X=V(F)$ contains $P$. 

Now suppose we are under the assumptions in $b)$. Then, by passing to another element in the same $\Gamma$-orbit, by Lemma \ref{lMFNormal} we can assume that $A_1$ is one of the matrices $M$ in types (d) or (e) in Table \ref{tPfaffZero}, $B_0=0$ and $B_1=S$. Since $[AB]=[F]$, we obtain
\[
[F\id_{2\times 2}]= [ A_2B_1] =[A_2 S].
\]
Denoting
\[
A_2 = (Q_{ij})_{1\le i \le 2, 1\le j \le 6}
\]
we get in case (d)
\begin{align*}
(Q_{11}, \dots, Q_{16}) \cdot (l_2, -l_1, l_0, 0, 0, 0)^t &= F\\
(Q_{11}, \dots, Q_{16}) \cdot (0, -l_4, l_3, l_2, -l_1, l_0)^t &= 0
\end{align*}
and in case (e) 
\begin{align}
(Q_{11}, \dots, Q_{16}) \cdot (l_2, -l_1, l_0, 0, 0, 0)^t &= F \label{fCaseE1} \\ 
(Q_{11}, \dots, Q_{16} )\cdot (0, 0, l_3, l_2, -l_1, l_0)^t &= 0. \label{fCaseE2}
\end{align}

The argument in both cases can now be finished similarly, for simplicity we only give details for case (e). Relation (\ref{fCaseE2}) above shows that $(Q_{11}, \dots, Q_{16} )$ is a degree $2$ syzygy of $(0, 0, l_3, l_2, -l_1, l_0)$, and the module of all syzygies is generated by the columns of
\[
\begin{pmatrix}
     1&0&0&0&0&0&0&0\\
     0&1&0&0&0&0&0&0\\
     0&0&{l}_{2}&{l}_{1}&0&{l}_{0}&0&0\\
     0&0&{-{l}_{3}}&0&{l}_{1}&0&{l}_{0}&0\\
     0&0&0&{l}_{3}&{l}_{2}&0&0&{l}_{0}\\
     0&0&0&0&0&{-{l}_{3}}&{-{l}_{2}}&{l}_{1}\end{pmatrix}.
\]
Therefore there exist linear forms $m_0, m_1, m_2$ auch that 
\[
Q_{13}= l_2 m_2 +l_1m_1 +l_0m_0.
\]
Substituting this into formula (\ref{fCaseE1}) gives
\[
F= Q_{11}l_2 - Q_{12}l_1 + l_0 (l_2 m_2 +l_1m_1 +l_0m_0)
\]
and hence $F \in (l_1, l_2, l_0^2)$ which is the ideal of the unique double line contained in the subscheme defined by the $4\times 4$ sub-Pfaffians of $A_1$. 
The computation in case (d) is similar. This finishes the proof.
\end{proof}

\begin{proposition}\label{pWellDefinedMap}
The map
\[
\psi \colon \mf^{ss} \to \wskss
\]
sending $([A],[B],[F])$ to $([A_1], [F])$ is well-defined. Hence one obtains a map
\[
\overline{\psi}\colon \mf^{ss} \to \wMM
\]
by post-composing with the map to the quotient. Moreover, $\overline{\psi}$ is constant on $\Gamma$-orbits.
\end{proposition}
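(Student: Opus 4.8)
The plan is to verify that the assignment $([A],[B],[F])\mapsto ([A_1],[F])$ lands inside $\wskss$, and then that it descends to $\wMM$ and is $\Gamma$-invariant. First I would check that the image point $([A_1],[F])$ genuinely lies in $\mathcal T \subset \skss\times\cubics$, i.e.\ that $[A_1]$ is a semistable skew-symmetric $6\times 6$ matrix of linear forms and that $\Pf(A_1)\in(F)$. Semistability of $[A_1]$ is built into the definition of $\mf^{ss}$ (Definition \ref{dMFStabilityProp}). The containment $\Pf(A_1)\in(F)$ is exactly the content of Lemma \ref{lMFNormal}: in Case (A), where $B_0\neq 0$, one has $(\Pf(A_1))=(F)$; in Case (B), where $B_0=0$, one has $\Pf(A_1)=0\in(F)$ trivially. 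So in all cases $([A_1],[F])\in\mathcal T$.

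Next I must show the image lands not just in $\mathcal T$ but in the closed subset $\wskss$, which is the closure of $\pi_1^{-1}(\skss\smallsetminus\skss_0)$. Here I would split according to whether $\Pf(A_1)$ vanishes. If $\Pf(A_1)\neq 0$, then $[A_1]\in\skss\smallsetminus\skss_0$ and the point $([A_1],[F])$ lies in $\pi_1^{-1}(\skss\smallsetminus\skss_0)$ by construction, hence in $\wskss$. If $\Pf(A_1)=0$, i.e.\ $[A_1]\in\skss_0$, one cannot argue pointwise; instead one uses that $\wskss$ contains all of $\pi_1^{-1}(\skss_0)\cap\wskss$, and I would invoke the description of $\wskss$ over the Pfaffian-zero locus together with Lemma \ref{lWellDefined}: part (a) gives that $V(F)$ set-theoretically contains the rank $\le 2$ locus of $A_1$, and part (b) upgrades this to scheme-theoretic containment of the distinguished degree $2$ curve in the problematic cases (d), (e). Combined with Theorem \ref{tGeometryM0}, which classifies all semistable matrices with vanishing Pfaffian and attaches to each a subscheme $Y$, this pins down that $([A_1],[F])$ satisfies precisely the incidence conditions cutting out $\wskss$ inside $\mathcal T_0$. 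This is the step I expect to be the main obstacle, since it requires matching the ad hoc stability/geometry conditions of $\mf^{ss}$ against the closure $\wskss$ case by case through Table \ref{tPfaffZero}; all the needed input, however, is already assembled in Lemmas \ref{lMFNormal} and \ref{lWellDefined}.

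Finally, the map $\overline\psi$ is obtained by post-composing $\psi$ with the canonical projection $\wskss\to\wMM$, so it is automatically well-defined once $\psi$ is. For $\Gamma$-invariance, I would compute the effect of $\gamma\in\Gamma$ on the first component: by Definition \ref{dGroupActionMF}, $\gamma$ sends $A_1$ to $g A_1 g^t$ for the $\GL_6(\C)$-block $g$ of $\gamma$ acting on $6\OO_{\P^4}$, while fixing $[F]$. Hence $\psi(\gamma\cdot([A],[B],[F])) = ([g A_1 g^t],[F])$, which lies in the same $G=\GLsix$-orbit as $([A_1],[F])$ inside $\wskss$. Since $\wMM$ is by definition the image of $\wskss$ in the quotient $\MM\times\cubics = (\skss\times\cubics)//G$, the two points $([A_1],[F])$ and $([g A_1 g^t],[F])$ have the same image in $\wMM$, so $\overline\psi$ is constant on $\Gamma$-orbits, as claimed.
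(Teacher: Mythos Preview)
Your proposal is correct and follows essentially the same route as the paper. The only point worth flagging is that the ``incidence conditions cutting out $\wskss$ inside $\mathcal T_0$'' you refer to are made precise in the paper by invoking \cite[Theorem 2.1]{BB22-2}: this external result says that membership in $\wskss$ is equivalent to (i) $(\Pf(A_1))=(F)$ when $\Pf(A_1)\neq 0$, and (ii) the scheme cut out by the $4\times 4$ sub-Pfaffians of $A_1$ together with $F$ containing a degree $2$ curve when $\Pf(A_1)=0$; the paper then verifies (i) via Lemma \ref{lMFNormal} (A) and (ii) via Lemma \ref{lWellDefined}, exactly as you outline.
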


\begin{proof}
By \cite[Theorem 2.1]{BB22-2} it suffices to show the following two assertions to prove that $\psi$ is well-defined:
\begin{enumerate}
\item
If $([A],[B],[F]) \in \mf^{ss}$ and $\Pf(A_1) \neq 0$, then the ideal generated by $F$ coincides with the ideal generated by $\Pf(A_1)$.
\item
If $([A],[B],[F]) \in \mf^{ss}$ and $\Pf(A_1) = 0$, then the subscheme $\overline{Y}$ defined by the $4\times4$ sub-Pfaffians of $A_1$ and $F$ contains a degree $2$ curve. In other words, in cases (a) -(e) of Table \ref{tPfaffZero} we need to show that $X= V(F)$ contains the unique degree $2$ curve contained in the scheme $Y$, and in case (f) we need to show that $X$ contains the rank $2$ locus of $A_1$. 
\end{enumerate}

Notice that under the assumptions of $a)$, Lemma \ref{lMFNormal} says we are in case (A) of that Lemma, and the conclusion that the ideal generated by $F$ coincides with the ideal generated by $\Pf(A_1)$ is true. Hence it suffices to prove $b)$, but this is nothing but the assertion of Lemma \ref{lWellDefined}. The remaining statements of the Proposition are clear.
\end{proof}

\begin{theorem}\label{tFibresMF}
Each fibre of the map
\[
\overline{\psi}\mid_{\mf^{ps}} \colon \mf^{ps} \to \wMM
\]
is a single $\Gamma$-orbit. 
\end{theorem}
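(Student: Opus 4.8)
The plan is to reduce the statement, fibre by fibre, to the normal forms provided by Lemma~\ref{lMFNormal}, and then to show that within each normal form the remaining freedom is exactly the freedom of the $\Gamma$-action. Fix a point $w\in\wMM$ and let $([A],[B],[F])$ and $([A'],[B'],[F'])$ be two points of $\mf^{ps}$ lying over $w$; I want to produce $\gamma\in\Gamma$ carrying the first to the second. Since $\overline{\psi}$ sends $([A],[B],[F])$ to the class of $([A_1],[F])$ in $\wMM$, having the same image means first of all $[F]=[F']$ (cubics are recovered from a point of $\wMM$ via the second projection), so we may assume $F=F'$ outright; and it means $[A_1]$ and $[A_1']$ have the same image under $\skss\to\MM$. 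Because we are on the polystable locus $\mf^{ps}$, by definition $[A_1]$ and $[A_1']$ are polystable, hence have closed $G$-orbits, hence actually lie in the \emph{same} $G$-orbit: there is $B_0\in\mathrm{GL}_6(\C)$ with $A_1'=B_0^{t}A_1 B_0$. Lifting $B_0$ to the block-diagonal element $\mathrm{diag}(\mathrm{id}_2,B_0)\in\Gamma$ and applying it, we reduce to the case $A_1=A_1'$.

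Now split along the dichotomy of Lemma~\ref{lMFNormal}, governed by whether $\Pf(A_1)=0$, equivalently $B_0=0$. In the case $\Pf(A_1)\neq0$ (case (A) of Lemma~\ref{lMFNormal}), both $([A],[B],[F])$ and $([A'],[B'],[F])$ are $\Gamma$-equivalent to the explicit normal form displayed there, in which $A$ is block-diagonal with blocks $\begin{pmatrix}0&F\\-F&0\end{pmatrix}$ and $A_1$, and $B$ is block-diagonal with blocks $\begin{pmatrix}0&1\\-1&0\end{pmatrix}$ and $\Pfad{A_1}$. Since this normal form is completely determined by the pair $(A_1,F)$ — which we have already arranged to agree — the two triples land on literally the same point after their respective $\Gamma$-translations, and composing gives the required $\gamma$. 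In the case $\Pf(A_1)=0$ we have $B_0=0$ and $B_1\neq0$; by Lemma~\ref{lMFNormal}(B) (using semistability of $A_1$, which holds on $\mf^{ps}\subset\mf^{ss}$) both triples are $\Gamma$-equivalent to the form with $A_1=M$, $B_1=S$ for one of the rows of Table~\ref{tPfaffZero}, with $B_2$ still a free block. So, after reduction, I may assume $A=A'$ on the $A_1$-block, $B_1=B_1'=S$, $B_0=B_0'=0$, and it remains to understand the matrices $A_2,A_3,B_2$ (and their primed versions) and the residual stabiliser in $\Gamma$ of the pair $(M,S)$.

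The heart of the argument — and the step I expect to be the main obstacle — is this last normalisation: showing that once $A_1=M$ and $B_1=S$ are fixed, the blocks $A_2,A_3,B_2$ are determined up to the action of the residual subgroup $\Gamma_{M,S}\subset\Gamma$ preserving $(M,S)$. Here one uses the defining equations of $\mf$ heavily: $AB=[F\cdot\mathrm{id}]$ forces $A_2S = [F\cdot\mathrm{id}_{2\times2}]$ (already exploited in Lemma~\ref{lWellDefined}) together with $A_3\cdot 0 + A_2 B_2$-type relations, $BA=[F\cdot\mathrm{id}]$ gives the transposed relations, and $[\Pf(A)]=[\Pf(B)]=[F^2]$ pins down $A_3$ and the top of $B_2$. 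Concretely, the relation $A_2 S = F\cdot\mathrm{id}_{2\times2}$ identifies the rows of $A_2$ as specific degree-$2$ syzygies modulo the syzygies of $S^t$, and Proposition~\ref{pSameInformation} (the columns of $S$ generate the syzygy module of $M=A_1$, and any two such skew presentations are $G$-conjugate) is what lets me absorb the ambiguity into $\Gamma$; the remaining entries $A_3$ and $B_2$ are then forced, or adjustable by the unipotent part of $\Gamma_{M,S}$, by an entry-by-entry computation in each of the six cases of Table~\ref{tPfaffZero}. I would carry this out uniformly where possible and case-by-case where the table's matrices differ, checking at the end that the polystability hypothesis is not needed beyond the step that put $A_1$ and $A_1'$ in the same $G$-orbit (it is semistability, via Lemma~\ref{lMFNormal}(B), that does the rest). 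Assembling the partial $\Gamma$-translations from all stages yields a single $\gamma\in\Gamma$ with $\gamma\cdot([A],[B],[F]) = ([A'],[B'],[F'])$, proving each fibre is one $\Gamma$-orbit.
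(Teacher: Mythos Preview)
Your plan is essentially the paper's proof: reduce via Lemma~\ref{lMFNormal} and polystability to $A_1=A_1'=M$, $B_1=B_1'=S$, then absorb the remaining freedom in $A_2,B_2,A_3$ by the unipotent part of $\Gamma$ using the syzygy relations furnished by Proposition~\ref{pSameInformation}. Two small corrections to your sketch: first, since you are on $\mf^{ps}$, only the four \emph{polystable} rows (a), (b), (c), (f) of Table~\ref{tPfaffZero} occur, not all six; second, the paper does \emph{not} need an entry-by-entry case split---the normalisation is carried out uniformly: from $AB=A'B'=F\cdot\id$ one gets $(A_2-A_2')S=0$, whence $A_2'-A_2=uM$ for some $u$ by Proposition~\ref{pSameInformation}, and acting by $\begin{psmallmatrix}\id&u\\0&\id\end{psmallmatrix}$ fixes $A_2$; then $M(B_2'-B_2)=0$ gives $B_2'-B_2=Sv$, and acting by $\begin{psmallmatrix}\id&v/2\\0&\id\end{psmallmatrix}$ fixes $B_2$ (using skew-symmetry); finally $S(A_2-A_2')=0$ and $S(A_3-A_3')=0$ force $A_2=A_2'$ and $A_3=A_3'$ because $S$ is injective on this side. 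So your anticipated ``case-by-case where the table's matrices differ'' is unnecessary.
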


\begin{proof}
Let $x\in \wMM$ be an element represented by a pair $([A_1], [F])$ with $A_1$ polystable. We show that if $([A], [B], [F])$ and $([A'], [B'], [F])$ are two elements in $\mf^{ps}$ mapping down to $x$, then they are in the same $\Gamma$-orbit. 
We distinguish two cases: $\Pf (A_1)\neq 0$ and $\Pf (A_1)=0$. 

\medskip

\fbox{Case 1: $\Pf (A_1)\neq 0$}: In this case, both $([A], [B], [F])$ and $([A'], [B'], [F])$ can be brought to the normal form in Lemma \ref{lMFNormal}, (A). 

\medskip

\fbox{Case 2: $\Pf (A_1) = 0$}: Here we can assume $A_1=A_1'=M$ and $B_1=B_1'=S$ as in (a), (b), (c) or (f) in Table \ref{tPfaffZero} by Lemma \ref{lMFNormal}, (B). Furthermore we can assume 
\[
	AB = A'B' = F \cdot \id.
\]
Indeed, the definition of matrix factorization now gives
\[
	[AB] = [A'B'] = [F \cdot \id]
\]
and we can assume
\[
	AB = F \cdot \id \quad \text{and}  \quad A'B' = \lambda F \cdot \id
\]
Replacing $B'$ by $\lambda^{-1} B'$ and operating with 
\[
	L = \begin{pmatrix} \lambda^{-1} \id_2 & 0 \\ 0 &\id_4 \end{pmatrix}
\]
on the primed matrix factorization, 
we keep $A_1 =  A_1'= M$, $B_1 = B_1'=  S$ and get in addition
\[
	AB = A'B' = F \cdot \id. \quad \quad (\ast)
\]

The operation of the unipotent radical of $\Gamma$ is now
\begin{align*}
\begin{pmatrix}
	\id & u \\
	0 & \id
\end{pmatrix}\begin{pmatrix}
	A_3 & A_2 \\
	-A_2^t & M
\end{pmatrix}\begin{pmatrix}
	\id & 0 \\
	u^t & \id
\end{pmatrix} &= \begin{pmatrix}
	A_3-uA_2^t+A_2u^t+uMu^t & A_2+uM \\
	-A_2^t + Mu^t & M
\end{pmatrix}\\
\begin{pmatrix}
	\id & 0 \\
	-u^t & \id
\end{pmatrix}\begin{pmatrix}
	0 & S^t \\
	-S & B_2
\end{pmatrix}\begin{pmatrix}
	\id & -u \\
	0 & \id
\end{pmatrix} &= \begin{pmatrix}
	0 & S^t \\
	-S & B_2+Su-u^tS^t
\end{pmatrix}
\end{align*}
Writing out the condition $(*)$ we get
\[
	\begin{pmatrix}
	-A_2S & A_3S^t+A_2B_2\\
	0 &  -A_2^tS^t + MB_2
	\end{pmatrix}
	=
	\begin{pmatrix}
	-A'_2S & A_3'S^t+A_2'B_2' \\
	0 &  -A_2^tS^t + MB'_2
	\end{pmatrix}
	= 
	\begin{pmatrix}
	\id \cdot F & 0 \\
	0 & \id \cdot F
	\end{pmatrix}
\]
In particular $( A_2- A_2')S=0$, i.e. that $ A_2-A_2'$ is a quadratic syzygy of $S$. Since all such syzygies are generated by the rows of $M$ by Proposition \ref{pSameInformation}, we can find a $2\times 6$ matrix $u$ of linear forms such that 
$$A_2'-A_2=uM \iff A_2' = A_2+uM$$
Operating with 
$$
U = \begin{pmatrix}
	\id & u \\
	0 & \id
\end{pmatrix}
$$
on $A$ and $B$ we can assume in addition $A_2 = A_2'$.

Now we argue similarly for $B_2$. By $(\ast)$ we have
$$
- A_2^tS^t+MB_2 = F\cdot \id = -A_2^tS^t + MB_2'
$$
and hence $M(B_2'- B_2)=0$. Thus we can choose a $2 \times 6$ matrix of linear forms $v$ such that $Sv=B_2'- B_2$. By skew-symmetry we find
$$
	B_2'=B_2+\frac{1}{2}(-v^tS^t+Sv).
$$ 
Acting with
$$
V = \begin{pmatrix}
\id & \frac{1}{2}v\\
0 & \id
\end{pmatrix}
$$
on $A$ and $B$ we obtain $B_2=B_2'$ but possibly loose the equation $A_2 = A_2'$. Still $(*)$ now gives
$$
-A_2^tS^t+MB_2 = F\cdot \id = -(A_2')^tS^t + MB_2
$$
and hence $S(A_2-A_2')=0$. But $S$ does not have any syzygies on this side, so $A_2 = A_2'$.

Finally $(*)$ now gives
$$
SA_3 + B_2A_2^t =0=SA_3'+B_2A_2^t
$$
i.e. $S(A_3-A_3')=0$ and $S$ has no syzygies on this side. Hence also $A_3 = A_3'$.
\end{proof}

\begin{remark}\label{rSurjectivityMF}
It can be shown that the map $\overline{\psi}\mid_{\mf^{ps}} \colon \mf^{ps} \to \wMM$ is surjective, but the proof uses a variant of the Shamash construction, which we will discuss in its proper context in a different article.
\end{remark}

\section{The connection to the intermediate Jacobian}\label{sIntermediateJacobians}

In this section we fix a smooth cubic threefold $X \subset \P^4$ and a cubic polynomial $F \in \C[x_0,\dots,x_4]$ defining $X$. We denote by $J_X$ the intermediate Jacobian, and consider 
\[
	\wMM_X := \{ ([M], [F]) \, \mid \, V(F) = X\} \subset \wMM 
\]
with its reduced structure.

Given $([M],[F]) \in \wMM_X$, we consider the image $\FF_M$ of the map given by the Pfaffian adjoint matrix:
\[
 6 \OO_X(-1) \xrightarrow{\Pfad(M)} 6 \OO_X(1) . 
\]
Here $\Pfad(M)$ is the skew matrix whose $(i,j)$-entry for $i<j$ is 
\[
(-1)^{i+j} \Pf (M_{ij})
\]
where $M_{ij}$ is the matrix obtained from $M$ by deleting rows and columns with indices $i, j$.

We now want to prove that there is an isomorphism from $\wMM_X$ to 
Druel's compactification $\bMM_X$ of the moduli space of stable rank 2 vector bundles with $c_1=0$ and $c_2=2$ on $X$ sending $([M],[F])$ to $\FF_M$. 
For this we recall some properties of $\bMM_X$ from \cite{Dru00} following \cite{Beau02}.

\medskip

$\bMM_X$ is smooth and connected, and contains a nonempty open subset $\oMM_X$ corresponding to stable rank 2 vector bundles $\EE$ with $c_1=0$ and $c_2=2$ on $X$. Every such $\EE$ sits in an exact sequence 
\[
0 \to 6 \OO_{\P^4}(-2) \xrightarrow{M} 6 \OO_{\P^4}(-1) \to \EE \to 0 
\]
with $M$ a skew $6\times 6$ matrix with $X = V\bigl( \Pf (M) \bigr)$, and conversely, every cokernel $\EE_M$ of a skew map $M$ with $X = V\bigl( \Pf (M) \bigr)$ as in the above short exact sequence is a vector bundle of this type. From this one sees that $\oMM_X$ is isomorphic to the GIT quotient of the space of such skew matrices up to congruence. Points in $\bMM_X - \oMM_X$ correspond to isomorphism classes of polystable sheaves with $c_1=0, c_2=2, c_3=0$ on $X$. Moreover, 
\[
\bMM_X - \oMM_X = \mathcal{A}\cup \mathcal{B}
\]
with $\mathcal{A}$ an irreducible locally closed codimension $1$ subvariety, and $\mathcal{B}$ an irreducible closed codimension $1$ subvariety containing all points of $\overline{\mathcal{A}}-\mathcal{A}$. Representatives of points in $\mathcal{A}$ and $\mathcal{B}$ can be described as follows.

\begin{itemize}
\item[($\alpha$)] Let $C$ be a smooth conic in $X$, $L$ the positive generator of $\Pic(C)$ (so that $\OO_X(1)|_C
\simeq L^{\otimes 2}$ ). Let $\EE$ be the kernel of the canonical evaluation map $H^0(C, L) \otimes \OO_X \to L $. Then $\EE$ is a torsion free sheaf, with $c_1(\EE) = c_3(\EE) = 0$ and $c_2(\EE) = [C]$. 
\item[($\beta$)] Let $l,l'$ be two lines in $X$ (possibly equal), and let $I_l,I_{l'}$ be their ideal sheaves. Then the sheaf $I_l \oplus I_{l'}$ is a torsion free sheaf with $c_1(E) = c_3(E) = 0$ and $c_2(E) = [l] + [l']$ . 
\end{itemize}

\medskip

We will now show that there is a morphism 
\[
	\wMM_X \xrightarrow{\im\, \Pfad}  \bMM_X
\]
sending $([M],[F])$ to $\FF_M$. We then consider the morphism $\widetilde{c}_2$ defined by the diagram
\xycenter{
	\wMM_X \ar[r]^{\im\, \Pfad} \ar[dr]_{\widetilde{c}_2} & \bMM_X  \ar[d]^{c_2}\\
	& J_X^{(2)}
}
Here we think of the intermediate Jacobian $J_X$ as being parametrised, using the Abel-Jacobi map, by the group of $1$-cycles on $X$ that are homologically equivalent to zero, modulo those that are rationally equivalent to zero, and of $J_X^{(2)}$ as the translate of $J_X$ by the class of twice a line. See \cite{Gri84} for details concerning these definitions.  Moreover, the Chern classes are understood to take values in the Chow groups of cycles modulo rational equivalence, as in Grothendieck's sense. 

\begin{lemma}\label{lDruelNonZero}
If $\Pf(M) \not=0$ then $\EE_M \simeq \FF_M$.
\end{lemma}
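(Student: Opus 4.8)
The plan is to unravel the definitions: for $([M],[F]) \in \wMM_X$ with $\Pf(M) \neq 0$, the pair $([M],[F])$ comes from a genuine Pfaffian representation, so $F$ (up to scalar) equals $\Pf(M)$, and $M$ defines the skew-symmetric matrix factorization $M \cdot \Pfad(M) = \Pf(M)\cdot \id = F\cdot\id$ (and likewise $\Pfad(M)\cdot M = F\cdot\id$). First I would recall the standard fact that a skew $6\times6$ matrix $M$ of linear forms with $\Pf(M)=F$ (nonzero) gives a $2$-periodic matrix factorization of $F$ via the pair $(M, \Pfad(M))$, hence upon restriction to $X=V(F)$ the cokernel $\EE_M$ of $6\OO_{\P^4}(-2)\xrightarrow{M}6\OO_{\P^4}(-1)$ is supported on $X$, locally free there, and fits into the periodic resolution. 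The point is then to identify this cokernel $\EE_M$ with the image sheaf $\FF_M$ of $\Pfad(M)\colon 6\OO_X(-1)\to 6\OO_X(1)$.

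The key step is the following exactness statement on $X$: the complex
\[
6\OO_X(-3) \xrightarrow{M} 6\OO_X(-2) \xrightarrow{\Pfad(M)} 6\OO_X(-1) \xrightarrow{M} 6\OO_X
\]
obtained by restricting the $2$-periodic free resolution of $\C[x]/(F)$ to $X$ is exact (this is the defining property of a matrix factorization: over $R/(F)$ the complex $\dots \to R(-1)^6 \xrightarrow{M} R^6 \xrightarrow{\Pfad(M)} R(1)^6 \xrightarrow{M} R(2)^6 \to \dots$ is acyclic with cohomology the maximal Cohen--Macaulay module $\EE_M$). Granting this, $\FF_M := \im(\Pfad(M)|_X)$ sits in $0\to \FF_M \to 6\OO_X(-1) \xrightarrow{M} \im(M|_X)\to 0$, while the cokernel presentation gives $0\to \ker(M|_X)\to 6\OO_X(-1)\xrightarrow{M} \EE_M(-1)\to 0$ after an appropriate twist; and exactness of the matrix-factorization complex says precisely $\ker(\text{next }M) = \im(\Pfad(M))$. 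I would match the twists carefully — $\EE$ has $c_1=0$ so the relevant sequence is $0\to 6\OO_{\P^4}(-2)\xrightarrow{M}6\OO_{\P^4}(-1)\to\EE_M\to0$, and one checks $\FF_M$ as defined (image of $6\OO_X(-1)\xrightarrow{\Pfad(M)}6\OO_X(1)$) is, after the twist by $\OO_X(1)$ built into the definition, the same MCM module on $X$ — concluding $\EE_M\simeq\FF_M$ (possibly up to a fixed twist which is absorbed into the normalization of $\FF_M$ in the paper's convention).

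I would carry this out in the order: (1) record that $\Pf(M)\neq0$ forces $[F]=[\Pf(M)]$ so $M$ is an honest Pfaffian representation of $X$, hence by Beauville's classical result $\EE_M$ is a stable rank $2$ bundle with $c_1=0,c_2=2$ appearing in the stated short exact sequence; (2) write down the $2$-periodicity $M\,\Pfad(M)=\Pfad(M)\,M=\Pf(M)\,\id$ and deduce the periodic free resolution of $\EE_M$; (3) restrict to $X$ and invoke acyclicity of the matrix factorization complex to identify $\ker(M|_X)$ with $\im(\Pfad(M)|_X)$; (4) twist-match to conclude this image, suitably normalized, is $\FF_M$, giving $\EE_M\simeq\FF_M$.

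The main obstacle I anticipate is purely bookkeeping: getting all the twists to line up so that the ``image of $\Pfad(M)$ on $X$'' as literally defined in the paper (a subsheaf of $6\OO_X(1)$) is identified with the cokernel $\EE_M$ (a quotient of $6\OO_X(-1)$) rather than with some twist $\EE_M(k)$. One must use that $\EE_M$ is the $k$-th syzygy module and the periodicity has period $2$ with a degree shift of $3$ (since $\deg F=3$), so $\EE_M^\vee\simeq\EE_M(c_1)=\EE_M$ by self-duality of rank $2$ bundles with $c_1=0$; the dualization exchanging $M$ and $\Pfad(M)$ is exactly what makes the image of $\Pfad(M)$ reproduce $\EE_M$ up to the fixed twist incorporated in the statement. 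Once the periodicity complex is in hand, exactness is classical (Eisenbud), so no genuinely hard step remains beyond this twist reconciliation.
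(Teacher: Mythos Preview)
Your approach is essentially the paper's: use the identity $M\cdot\Pfad(M)=\Pfad(M)\cdot M=\Pf(M)\cdot\id$ to get a $2$-periodic exact complex on $X$, and read off $\EE_M\simeq\FF_M$ from it. But your bookkeeping worry is real, and in fact your displayed complex has the wrong twists: the entries of $\Pfad(M)$ are the $4\times4$ sub-Pfaffians of a matrix of linear forms, hence are \emph{quadratic}, so $\Pfad(M)$ shifts degree by $2$, not $1$. The correct sequence is
\[
\dots \to 6\OO_X(-2)\xrightarrow{M} 6\OO_X(-1)\xrightarrow{\Pfad(M)} 6\OO_X(1)\xrightarrow{M} 6\OO_X(2)\to \dots
\]
and then the identification is immediate with no need for duality or self-duality of rank $2$ bundles:
\[
\EE_M=\coker\bigl(6\OO_X(-2)\xrightarrow{M}6\OO_X(-1)\bigr)\;\simeq\;\ker\bigl(6\OO_X(1)\xrightarrow{M}6\OO_X(2)\bigr)\;\simeq\;\im\bigl(6\OO_X(-1)\xrightarrow{\Pfad(M)}6\OO_X(1)\bigr)=\FF_M,
\]
the first isomorphism coming from exactness at $6\OO_X(-1)$ and the second from exactness at $6\OO_X(1)$. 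That is the paper's entire proof; your steps (1)--(3) are exactly right in spirit, but step (4) and the anticipated ``obstacle'' dissolve once the degrees are written down correctly.
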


\begin{proof}
We have
 \[ 
 \Pfad(M)\cdot M =M \cdot \Pfad(M) =\Pf (M) \cdot\mathrm{Id},
 \] 
 hence on $X$ we get a $2$-periodic exact sequence
 \[
\dots \xrightarrow{} 6 \OO_X(-2)\xrightarrow{M} 6 \OO_X(-1) \xrightarrow{\Pfad(M)} 6 \OO_X(1) \xrightarrow{M} 6\OO_X (2) \xrightarrow{} \dots 
 \]
 hence
 \begin{gather*}
 \EE_M=\mathrm{coker}\left(6 \OO_X(-2)\xrightarrow{M} 6 \OO_X(-1)  \right) \simeq \ker \left( 6 \OO_X(1) \xrightarrow{M} 6\OO_X (2) \right)\\
  \simeq \im \left( 6 \OO_X(-1) \xrightarrow{\Pfad(M)} 6 \OO_X(1) \right) = \FF_M .
 \end{gather*}
 \end{proof}

\noindent
For the case $\Pf(M) = 0$, we need some preparations. 

\begin{lemma}\label{lDruelPrep}
Let $M$ and $S$ be as in Table \ref{tPfaffZero}. Then
\[
	\Pfad(M) = S I S^t
\]
with $I = \left(\begin{smallmatrix} 0 & 1 \\ -1 & 0\end{smallmatrix}\right)$ in cases (a), (d), (e), (f) and $I = \left(\begin{smallmatrix} 0 &  -1 \\ 1 & 0 \end{smallmatrix}\right)$ in cases (b) and (c). 

\noindent
Moreover, letting  
\[
\FF_{S} =  \im \left( 6 \OO_X(-1) \xrightarrow{S^t} 2 \OO_X \right) .
\]
we have
\[
	\FF_M \simeq  \FF_S.
\]
\end{lemma}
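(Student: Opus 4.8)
\textbf{Proof plan for Lemma \ref{lDruelPrep}.}

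The plan is to verify the identity $\Pfad(M) = S I S^t$ by a direct case-by-case computation, and then deduce the isomorphism $\FF_M \simeq \FF_S$ from this identity together with the $2$-periodicity of the matrix factorization and the relation between $M$ and $S$ recorded in Proposition \ref{pSameInformation}. For the first part I would simply compute both sides in each of the six cases (a)--(f) of Table \ref{tPfaffZero}: the $(i,j)$-entry of $\Pfad(M)$ is $(-1)^{i+j}\Pf(M_{ij})$, and each such $4\times 4$ sub-Pfaffian is a short polynomial expression in the $l_k$; the right-hand side $SIS^t$ is likewise an explicit $6\times 6$ skew matrix built from the two columns of $S$. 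One checks the signs work out to $I = \left(\begin{smallmatrix} 0 & 1 \\ -1 & 0\end{smallmatrix}\right)$ in cases (a), (d), (e), (f) and to $I = \left(\begin{smallmatrix} 0 & -1 \\ 1 & 0 \end{smallmatrix}\right)$ in cases (b), (c). This is routine but somewhat tedious; it is the bulk of the work and I would present it by displaying the matrices in one representative case and asserting the rest are analogous.

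For the second part, the key observation is that $SIS^t$ is a factorization of the map $\Pfad(M)$ through $2\OO_X$: we have
\[
6 \OO_X(-1) \xrightarrow{S^t} 2 \OO_X \xrightarrow{S I} 6 \OO_X(1),
\]
and since $I$ is invertible, $SI$ has the same image as $S$ (which, by Proposition \ref{pSameInformation}, generates the same module of relations; in any case $\im(SI) = \im(S)$ as subsheaves of $6\OO_X(1)$ up to the automorphism $I$ of $2\OO_X$). Therefore
\[
\FF_M = \im\bigl(\Pfad(M)\bigr) = \im\bigl(S I S^t\bigr) = \im\bigl(S I\cdot(S^t)\bigr),
\]
and since $\FF_S = \im(S^t\colon 6\OO_X(-1)\to 2\OO_X)$, the map $S I\colon 2\OO_X \to 6\OO_X(1)$ restricts to an isomorphism from $\FF_S$ onto its image $\FF_M$: it is surjective onto $\FF_M$ by construction, and injective because $SI$ is injective as a map of vector bundles wherever $S$ has rank $2$, while on all of $\P^4$ the composite $S I S^t = \Pfad(M)$ and $S^t$ have the same rank at every point (the cokernel of $S^t$ has no sections supported in codimension $\ge 2$ that $SI$ could kill — more precisely $SI$ is a bundle injection, hence injective on any subsheaf). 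I would phrase this last point using that $S$ defines the inclusion and $I$ is an isomorphism, so $S\mapsto SI$ only re-coordinatizes the target's $2$-dimensional factor.

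\textbf{Expected main obstacle.} The genuinely delicate point is not the bookkeeping in the Pfaffian computation but making the isomorphism $\FF_M \simeq \FF_S$ clean: $\im(\Pfad(M))$ is a priori only a subsheaf of $6\OO_X(1)$, and one must be sure that factoring through $2\OO_X$ via an invertible $I$ genuinely produces an \emph{isomorphic} sheaf rather than merely a sheaf with the same support. The cleanest way to see it is that $SIS^t = \Pfad(M)$ exhibits $\Pfad(M)$ as a composition in which the first arrow $S^t$ is surjective onto $\FF_S$ by definition and the second arrow $SI$ is a monomorphism of $\OO_X$-modules (because $I$ is invertible and $S$ has generically full rank $2$, and its kernel as a map of sheaves is $0$ since $\FF_S\subset 2\OO_X$ is torsion-free and $SI$ is generically injective). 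Hence $\FF_M = \im(\Pfad(M)) \cong \FF_S$. I would make sure to invoke torsion-freeness explicitly, since that is what upgrades "generically injective" to "injective" here.
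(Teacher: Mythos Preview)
Your approach is correct and matches the paper's: verify $\Pfad(M)=SIS^t$ by direct case-by-case computation (the paper simply delegates this to a Macaulay2 script), then factor $\Pfad(M)$ through $2\OO_X$ via this identity and use that $SI$ is an injective sheaf map to conclude $\FF_M\simeq\FF_S$. One terminological caution: $SI$ is \emph{not} a bundle injection (it drops rank where $S$ does), so the correct justification is the torsion-freeness argument you give in your ``main obstacle'' paragraph --- the kernel of $SI\colon 2\OO_X\to 6\OO_X(1)$ is generically zero, hence torsion, hence zero because $2\OO_X$ is torsion-free on the integral scheme $X$; the paper likewise just records that $S$ is injective (again citing a computer check).
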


\begin{proof}
The first assertion is a direct computation \cite[SISt.m2]{BB-M2}. Moreover, it can be checked \cite[SISt.m2]{BB-M2} that $S$ is injective on $\P^4$ and thus in particular on  $X$. 

We have a commutative diagram

\xycenter{
6 \OO_X(-1) \ar[rr]^{\Pfad(M)} \ar[d]^{S^t}  &  & 6 \OO_X(1) \\
2 \OO_X \ar[rr]^{I} &  & 2 \OO_X \ar[u]^S .
}
Since $S\circ I$ is injective, we see that $\FF_M \simeq  \FF_S$. 
\end{proof}

We now go through the polystable cases (a), (b), (c) and (f) in Table \ref{tPfaffZero}. 

\begin{lemma}\label{lConic}
Let  $([M],[F])\in \wMM_X$ where $M$ is of type $(a)$ in Table \ref{tPfaffZero}. Let $S$ be the corresponding syzygy matrix. Then $\FF_S$ is isomorphic to a sheaf $\EE$ given by the construction in case $(\alpha)$ above. 
Moreover, any such sheaf $\EE$ arises from a unique pair $([M],[F])\in \wMM_X$ in this way. 
\end{lemma}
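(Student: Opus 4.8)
The plan is to unwind the construction in case $(a)$ explicitly and match it with the torsion-free sheaf $\EE$ from $(\alpha)$. First I would recall from Lemma \ref{lDruelPrep} that $\FF_M \cong \FF_S = \im\bigl(6\OO_X(-1)\xrightarrow{S^t}2\OO_X\bigr)$, so everything reduces to understanding the map $S^t\colon 6\OO_X(-1)\to 2\OO_X$ for the explicit $2\times 6$ matrix $S$ attached to type $(a)$ in Table \ref{tPfaffZero}. In that row, $Y$ is a smooth conic; let me call it $C$. The ideal generated by the $2\times2$ minors of $S$ equals the ideal generated by the $4\times4$ Pfaffians of $M$ (Proposition \ref{pSameInformation}), which is the ideal of $C$, so the cokernel of $S^t$ is supported on $C$. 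Thus $\FF_S$ sits in $0\to\FF_S\to 2\OO_X\to\QQ\to 0$ with $\QQ$ a sheaf on $C$; identifying $\QQ$ with $L=\OO_C(1)^{1/2}$ (the positive generator of $\Pic C$, which exists since $C\cong\P^1$ and $\OO_X(1)|_C\cong L^{\otimes 2}$) and the map $2\OO_X\to\QQ$ with the evaluation $H^0(C,L)\otimes\OO_X\to L$ is the heart of the matter. Concretely, $L$ has $h^0=2$, so $2\OO_X\to L$ is the right rank; one checks that the two entries of a general row of $S^t$, restricted to $C$, give a basis of $H^0(C,L)$, and that the map is surjective onto $L$ and not onto $\OO_C(1)$. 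This identifies $\FF_S$ with the kernel $\EE$ of the evaluation map, i.e. with the sheaf in $(\alpha)$, and the stated Chern class computation ($c_1=c_3=0$, $c_2=[C]$) then follows from the resolution.

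For the "moreover" part — that every $\EE$ arising from construction $(\alpha)$ comes from a \emph{unique} $([M],[F])\in\wMM_X$ in this way — I would argue as follows. Given a smooth conic $C\subset X$, form $\EE=\ker(H^0(C,L)\otimes\OO_X\to L)$. One shows $\EE$ has a linear resolution: dualising and using that $C$ is projectively normal and cut out by quadrics, one gets a presentation $6\OO_X(-1)\xrightarrow{S^t}2\OO_X$ whose image is $\EE^\vee$ (or directly $\EE$), where $S$ is a $2\times6$ matrix of linear forms whose $2\times2$ minors generate $I_C$; this recovers the $S$ of row $(a)$ up to the action of $\mathrm{GL}_2\times\mathrm{GL}_6$ by the uniqueness part of Proposition \ref{pSameInformation} applied to syzygies. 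Then $M$ is determined as (a skew $6\times6$ matrix representing) the syzygy module of $S^t$, again unique up to $G=\GLsix$-congruence by Proposition \ref{pSameInformation}, and $F$ is determined by $([M],[F])\in\wMM_X$, i.e. $V(F)=X$, so the class $([M],[F])\in\wMM_X$ is unique. Conversely the first part of the lemma shows this $([M],[F])$ does map to $\EE$.

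The main obstacle I anticipate is the precise identification of the cokernel sheaf $\QQ$ of $S^t|_X$ with $L$ together with the evaluation map, rather than with some twist or some non-locally-free modification — i.e. checking that $S^t$ is "minimal" in the right sense so that $\coker(S^t)=L$ exactly and $\FF_S$ is exactly $\EE$, not merely isomorphic away from finitely many points. This is where I would lean on the explicit matrix: a direct Macaulay2-style computation (as in \cite[SISt.m2]{BB-M2}) confirming that over the generic point of $C$ the map $S^t$ has the expected rank and that the induced surjection onto $L$ has no torsion in the kernel. The surjectivity-and-uniqueness half is then essentially bookkeeping with Proposition \ref{pSameInformation}, once one knows $\EE$ (equivalently $\EE^\vee$) has a linear presentation of the stated shape — which follows from $h^0(\EE)=6$ and a standard Castelnuovo–Mumford regularity argument for the conic, but is worth spelling out.
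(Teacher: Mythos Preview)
Your plan is correct and matches the paper's approach: identify $\coker(S^t)$ as a line bundle $L$ on the smooth conic $C$, pin down its degree as $1$ so that $2\OO_X\to L$ is forced to be the evaluation map, and for the converse recover $S$ from a minimal free resolution and invoke Proposition~\ref{pSameInformation}. The paper resolves your ``main obstacle'' concretely by restricting to the plane $\{l_1=l_2=0\}$ containing $C$, where $S^t$ collapses to the $2\times 2$ symmetric matrix $\left(\begin{smallmatrix} l_3 & l_0\\ l_0 & l_4\end{smallmatrix}\right)$ and \cite[Prop.~3.1(b)]{Beau00} reads off $\deg L=1$; for existence in the ``moreover'' part it simply uses that smooth conics in $\P^4$ form a single $\GL_5(\C)$-orbit, and for uniqueness it observes that $S^t$ is the first map in the minimal free resolution of $\Gamma_*(L)=\bigoplus_n H^0(L(n))$. (One slip: ``$h^0(\EE)=6$'' cannot be right, since $0\to\EE\to 2\OO_X\to L\to 0$ gives $h^0(\EE)\le 2$; the $6$ counts the degree-$1$ relations in $\Gamma_*(L)$, not sections of $\EE$.)
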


\begin{proof}

In case $(a)$ we have
\[
S^t = \begin{pmatrix}
	l_2 & -l_1 & l_0 &&& l_3 \\
	&& l_4 & l_2 & -l_1 & l_0
\end{pmatrix}
\]
with the $l_i$ independent linear forms. $S^t$ has rank $1$ on a smooth conic $C$ in $X$, defined by
$l_2 = l_1 = l_3l_4-l_0^2 = 0$, and nowhere rank $0$. Therefore 
\[
\coker \left( 6 \OO_{\P^4}(-1) \xrightarrow{S^t} 2 \OO_{\P^4} \right)
\]
is a line bundle $L$ supported on 
$C$. Restricting to the $\P^2$ defined by $l_2=l_1=0$ we obtain an exact sequence
\[
	0 \to 2 \OO_{\P^2}(-1)
	\xrightarrow{
	\left(
	\begin{smallmatrix}
	l_3 & l_0 \\
	l_0 & l_4
	\end{smallmatrix}
	\right)
	}
	2 \OO_{\P^2}
	\to 
	L
	\to
	0.
\]	
By for example \cite[Proposition 3.1 (b)]{Beau00} applied to $L(-1)$ the degree of $L$ must be $1$. 
It is therefore the positive generator of $\Pic(C)$. We now consider the sequence on $X$: 
\[
 6 \OO_{X}(-1) \xrightarrow{S^t} 2 \OO_{X} \xrightarrow{} L \rightarrow 0.
\]
The map $2 \OO_{X} \to L$ gives linearly independent global sections of $L$; since $\deg L=1$, $H^0(C, L)$ has dimension $2$ and thus the map 
can be identified with the evaluation map $H^0(C, L)\otimes\OO_X \to L$. Therefore, $\FF_S$ is isomorphic to the sheaf $\EE$  given by the construction in case $(\alpha)$ for the conic $C$. 

\medskip

Any sheaf $\EE$ of this type arises like this because all conics in $\P^4$ form one $\mathrm{GL}_5 (\C)$-orbit, therefore all such sheaves $L$ as in $(\alpha)$ arise as cokernels of matrices $S^t$ as in (a). Then let $M$ be the corresponding matrix as in in Table \ref{tPfaffZero}, (a). 

For the uniqueness statement notice that $S^t$ represents the map $\varphi_1$ in the minimal free resolution
\[
\dots \rightarrow F_2 \xrightarrow{\varphi_2} F_1 \xrightarrow{\varphi_1} F_0 \rightarrow \Gamma_* (L) \rightarrow 0
\]
 of the associated graded module 
\[
\Gamma_* (L) =\bigoplus_{n\ge 0} H^0 \bigl( L(n) \bigr) =\bigoplus_{n \ge 0} H^0 \bigl( \OO_{\P^1}(1+2n)\bigr).
\]
Then one uses Proposition \ref{pSameInformation} to conclude that this determines $M$ up the action of $G$.
\end{proof}

\begin{lemma}\label{lTwoLines}
Let $S$ be as in Table \ref{tPfaffZero}, cases $(b), (c)$ or $(f)$, and let $l,l'$ be the two lines (possibly equal) where  $S$ drops rank. Then
$\FF_S = I_{l} \oplus I_{l'}$ is the sheaf associated to $l,l'$ as in case $(\beta)$ above. Conversely, any sheaf as in case $(\beta )$ arises from a unique pair $([M],[F])\in \wMM_X$ in this way. 
\end{lemma}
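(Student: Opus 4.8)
The plan is to mirror the structure of the proof of Lemma~\ref{lConic}, treating the three cases $(b)$, $(c)$, $(f)$ uniformly wherever possible and pointing out the small differences. First I would write out $S^t$ explicitly in each of the three cases, observing that $S^t$ has a block structure: its two columns (equivalently, the two rows of $S^t$) involve disjoint-looking sets of linear forms arranged so that $S^t$ drops rank exactly along two lines $l,l'$ (possibly equal, in case $(f)$ where it is a line with its first-order neighbourhood). Concretely, in case $(b)$ the first row of $S^t$ is $(l_2,-l_1,l_0,0,0,0)$ and the second is $(0,0,0,l_4,-l_3,l_2)$; the first drops rank along $l=V(l_0,l_1,l_2)$ and the second along $l'=V(l_2,l_3,l_4)$, so $\coker(6\OO_{\P^4}(-1)\xrightarrow{S^t}2\OO_{\P^4})$ splits as a direct sum of two rank-one sheaves, one supported on $l$ and one on $l'$. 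I would then identify each summand with the corresponding ideal sheaf: the cokernel of $(l_i,l_j,l_k):3\OO_{\P^4}(-1)\to\OO_{\P^4}$ (padded with zeros) restricted to $X$ is exactly $\OO_l$ after truncation, but passing to the kernel/image side via $\FF_S=\im(S^t)$ one gets $I_l$ on $X$ — more precisely, the Koszul-type presentation $3\OO_X(-1)\to\OO_X\to\OO_l\to 0$ identifies the image of the relevant block of $S^t$ with $I_l\subset\OO_X$. Summing the two blocks gives $\FF_S\simeq I_l\oplus I_{l'}$, and by Lemma~\ref{lDruelPrep} this equals $\FF_M$.

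Case $(f)$ needs a separate sentence: there $l=l'$, the two rows of $S^t$ are identical copies of $(l_2,-l_1,l_0)$ (suitably placed), so the cokernel is $\OO_l^{\oplus 2}$ and $\FF_S\simeq I_l\oplus I_l$, which is still the sheaf of type $(\beta)$ with the two lines coincident. Case $(c)$ is intermediate: the two lines $l=V(l_0,l_1,l_2)$ and $l'=V(l_1,l_2,l_3)$ meet (along $V(l_0,l_1,l_2,l_3)$), but since $S^t$ itself still visibly block-decomposes — the two rows are $(l_2,-l_1,l_0,0,0,0)$ and $(0,0,0,l_3,-l_2,l_1)$ — the cokernel, and hence $\FF_S$, still splits as $I_l\oplus I_{l'}$ even though the scheme $Y$ from Table~\ref{tPfaffZero} carries an embedded point; that embedded-point subtlety is irrelevant for $\FF_S$ because $\FF_S$ only sees the two lines. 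So in all three cases $\FF_M\simeq\FF_S\simeq I_l\oplus I_{l'}$ is exactly the sheaf of case $(\beta)$.

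For the converse, I would argue exactly as in Lemma~\ref{lConic}: any configuration of two lines $l,l'$ in $\P^4$ (incident, skew, or coincident) can be put into one of the three normal forms by a $\mathrm{GL}_5(\C)$-change of coordinates — skew lines give case $(b)$, two distinct incident lines give case $(c)$, a doubled line gives case $(f)$ — so every sheaf of type $(\beta)$ arises as some $\FF_S$, and then one takes the associated $M$ from Table~\ref{tPfaffZero}. For uniqueness, $S^t$ is the first syzygy map $\varphi_1$ in the minimal graded free resolution of $\Gamma_*(I_l\oplus I_{l'})=\Gamma_*(I_l)\oplus\Gamma_*(I_{l'})$, so $S$ is determined up to the $\mathrm{GL}_2(\C)$ acting on columns and $\mathrm{GL}_6$ on rows; by Proposition~\ref{pSameInformation} the syzygy module of $S^t$ then determines $M$ up to the $G$-action, hence the point $([M],[F])\in\wMM_X$ is unique.

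The main obstacle I anticipate is the bookkeeping in identifying the image sheaf $\FF_S=\im(6\OO_X(-1)\xrightarrow{S^t}2\OO_X)$ with $I_l\oplus I_{l'}$ rather than with $\OO_l\oplus\OO_{l'}$ or some twist: one must be careful that it is the \emph{image} (a subsheaf of $2\OO_X$), not the cokernel, and check that the natural map $\FF_S\hookrightarrow 2\OO_X$ has the cokernel $\OO_l\oplus\OO_{l'}$ along the two lines with the correct (reduced) scheme structure on each factor — here the block-diagonal structure of $S^t$ in each of the cases $(b),(c),(f)$ is what makes this a line-by-line computation rather than a genuinely $4\times 4$ one, and the one point requiring a little thought is case $(c)$, where one must confirm the embedded point of $Y$ does not contribute to $\FF_S$. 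The matrix computations underlying $\Pfad(M)=SIS^t$ and the injectivity of $S$ are already supplied by Lemma~\ref{lDruelPrep}, so they can be cited rather than redone.
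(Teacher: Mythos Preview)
Your approach is essentially the one the paper takes, and indeed the paper's proof is much terser than yours: it writes $S^t$ uniformly as $\begin{pmatrix} l_2 & -l_1 & l_0 & 0 & 0 & 0 \\ 0 & 0 & 0 & m_2 & -m_1 & m_0 \end{pmatrix}$ with $l=V(l_0,l_1,l_2)$, $l'=V(m_0,m_1,m_2)$, and says ``the first claim follows'' from the block-diagonality. Your case-by-case discussion, the care about image versus cokernel, and the remark that the embedded point in case $(c)$ is irrelevant to $\FF_S$ are all correct elaborations of this.

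There is one genuine slip in your uniqueness argument. You write that $S^t$ is the first syzygy map $\varphi_1$ in the minimal free resolution of $\Gamma_*(I_l\oplus I_{l'})$. That is not right: $\Gamma_*(I_l\oplus I_{l'})$ is generated in degree $1$ by six linear forms, so in its minimal resolution $F_0=6R(-1)$ and $\varphi_1\colon F_1\to F_0$ records the Koszul syzygies among those generators, which is a map $6R(-2)\to 6R(-1)$, not $S^t$. What you want is the resolution of the \emph{cokernel} $\Gamma_*(\OO_l\oplus\OO_{l'})$: there $F_0=2R$, $F_1=6R(-1)$, and $\varphi_1=S^t$, exactly as in the conic case. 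Alternatively (and this is what the paper actually does here, more simply than in Lemma~\ref{lConic}), just observe that $\FF_S=I_l\oplus I_{l'}$ determines the unordered pair $\{l,l'\}$, hence determines $S$ up to row and column operations, and then invoke Proposition~\ref{pSameInformation}. Either fix is immediate; the rest of your argument stands.
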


\begin{proof}
In cases $(b)$, $(c)$ and $(f)$ we have
\[
S^t = \begin{pmatrix}
	l_2 & -l_1 & l_0 &&&  \\
	&& & m_2 & -m_1 & m_0
\end{pmatrix}
\]
with  $l = V(l_1,l_2,l_3)$ and $l' = V(m_1,m_2,m_3)$ lines in $X \subset \P^4$. The first claim follows.

For the converse first notice that there are three $\mathrm{GL}_6 (\C )$-orbits of pairs of lines in $\P^4$ corresponding to the cases $(b), (c), (f)$. Therefore every sheaf of type $(\beta)$ arises in this way. For the uniqueness notice that $\FF_S$ in this case determines $S$ up to row and column operations, hence $M$ up to an action of $G$ by Proposition \ref{pSameInformation}.
\end{proof}

\begin{lemma}\label{lNotPolystable}
Let $S$ be as in Table \ref{tPfaffZero}, cases $(d)$ or $(e)$, and let $l$ be the line where  $S$ drops rank. Then
$\FF_S$ is a nontrivial extension of the ideal sheaf $I_l$ by itself. 
\end{lemma}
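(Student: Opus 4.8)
The plan is to imitate the proof of Lemma \ref{lConic}, using $\FF_M \simeq \FF_S$ from Lemma \ref{lDruelPrep} to reduce everything to an analysis of the cokernel of $S^t$ on $X$. Concretely, in case (e) we have
\[
S^t = \begin{pmatrix}
	l_2 & -l_1 & l_0 &&& \\
	&& l_3 & l_2 & -l_1 & l_0
\end{pmatrix},
\]
with $l_0, \dots, l_4$ independent linear forms, so that the degeneracy locus of $S^t$ is the line $l = V(l_1, l_2, l_3)$ (in case (d) one replaces the last row appropriately; the argument is parallel). First I would compute the cokernel $\QQ = \coker(6\OO_{\P^4}(-1) \xrightarrow{S^t} 2\OO_{\P^4})$, identify its scheme-theoretic support with the subscheme $Y$ cut out by the $2\times 2$ minors of $S$ (equivalently the $4\times 4$ sub-Pfaffians of $M$, by Proposition \ref{pSameInformation}), and note from Lemma \ref{lWellDefined}\,b) that $l \subset X$, so $\QQ$ makes sense as a sheaf on $X$ via $S^t \colon 6\OO_X(-1) \to 2\OO_X$, with $\FF_S = \im(S^t) = \ker(2\OO_X \to \QQ)$.

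Next I would set up the extension. Projecting $2\OO_X$ onto its first factor composed with $S^t$ gives the row $(l_2, -l_1, l_0, 0,0,0)$, whose image is exactly $I_l(1)$ (twisting conventions aside); dually, the inclusion of the second factor of the target exhibits a sub/quotient structure. More cleanly: from the two rows of $S^t$ one reads off that there is a short exact sequence
\[
0 \to \FF' \to \FF_S \to \FF'' \to 0
\]
in which both $\FF'$ and $\FF''$ are identified with (twists of) the ideal sheaf $I_l$ of the single line $l$. Indeed, the top row alone has image $I_l$, and the image of $S^t$ modulo that subsheaf is governed by the bottom row $(0,0,l_3,l_2,-l_1,l_0)$, whose image (after accounting for the overlap in the $l_0$ column) is again $I_l$. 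One then checks $c_1(\FF_S) = 0$, $c_2(\FF_S) = 2[l]$, $c_3(\FF_S) = 0$, consistent with a selfextension of $I_l$ (each $I_l$ has $c_1 = 0$, $c_2 = [l]$, $c_3 = 0$).

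Finally I would show the extension is nontrivial. The key point is that $\FF_S$ is not $I_l \oplus I_l$: if it were, $\FF_M \simeq \FF_S$ would be polystable, but $M$ is strictly semistable and \emph{not} polystable in cases (d), (e) by Theorem \ref{tGeometryM0} (last column of Table \ref{tPfaffZero}) — and the map $\overline{\psi}$ together with the identification of $\bMM_X$ with the Maruyama--Druel--Beauville moduli space sends polystable $M$ to polystable sheaves and non-polystable $M$ to non-polystable sheaves. Alternatively, and more self-containedly, one argues directly that the class of the extension in $\Ext^1_X(I_l, I_l)$ is nonzero by exhibiting the explicit $2\times 6$ matrix of linear-form syzygies relating the two rows of $S^t$ (as in the syzygy computation in the proof of Lemma \ref{lWellDefined}): a splitting would force $S^t$ to be equivalent, by row and column operations over $\OO_X$, to a block-diagonal matrix with two $1\times 3$ blocks each of the form $(l_2, -l_1, l_0)$, and one checks this is impossible because the $l_0$-entry is shared between the two rows and cannot be removed by $\OO_X$-linear column operations without destroying one of the blocks (equivalently, $h^0$ of a suitable twist jumps). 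I expect the main obstacle to be this last step: pinning down precisely that the extension does not split requires either invoking the non-polystability of $M$ through the chain of identifications already set up in the paper, or a careful local computation of $\Ext^1_X(I_l, I_l)$ and the image of the extension class therein; the cleanest route is probably the former, citing Theorem \ref{tGeometryM0} and the compatibility of $\overline\psi$ with stability.
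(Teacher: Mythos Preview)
Your setup has the right shape, but you have misidentified where the work lies. The substantive content of this lemma is not the nontriviality of the extension; it is the claim that the kernel $K$ of the projection $\FF_S \twoheadrightarrow I_l$ onto the first coordinate of $2\OO_X$ is \emph{exactly} $I_l$, rather than something strictly larger. Your sentence ``the image of $S^t$ modulo that subsheaf is governed by the bottom row \dots\ whose image (after accounting for the overlap in the $l_0$ column) is again $I_l$'' glosses over precisely this point. Elements of $K$ are those $(0,g) \in \im S^t$; such $g$ arise not only from the last three columns of $S^t$ (which contribute $I_l$) but also from any syzygy $(\alpha,\beta,\gamma)$ of $(l_2,-l_1,l_0)$ \emph{on $X$}, via $g = \alpha m_2 + \beta m_1 + \gamma m_0$ in the paper's notation. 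On $\P^4$ all such syzygies are Koszul and contribute only elements of $I_l$, but on $X$ there is one extra syzygy, obtained by writing $F = q_2 l_2 + q_1 l_1 + q_0 l_0$, and one must verify that $q_2 m_2 + q_1 m_1 + q_0 m_0 \in (l_0,l_1,l_2)$. This is where the hypothesis $([M],[F]) \in \wMM_X$ is actually used: by \cite[Theorem~2.1]{BB22-2} (cf.\ Lemma~\ref{lWellDefined}\,b)), $X$ contains the specific degree~$2$ curve inside $Y$, and the paper carries out the verification by a direct case analysis in (d) and (e). Without this step your kernel could be strictly larger than $I_l$ and you would not have an extension of $I_l$ by $I_l$ at all.

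Two smaller points. First, your proposed argument for nontriviality via stability is circular: this lemma is one of the ingredients used to \emph{construct} the morphism $\wMM_X \to \bMM_X$ in the proof of Theorem~\ref{tComparisonDruel}, so you cannot appeal to properties of that morphism here. (The paper's own proof in fact gives no separate argument for nontriviality.) Second, a small correction: in case~(e) the line where $S$ drops rank is $l = V(l_0,l_1,l_2)$, not $V(l_1,l_2,l_3)$; at a point with $l_1 = l_2 = l_3 = 0$ but $l_0 \neq 0$ the matrix $S^t$ still has rank~$2$.
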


\begin{proof}
In cases $(d)$ and $(e)$ we have
\[
S^t = \begin{pmatrix}
	l_2 & -l_1 & l_0 &&&  \\
  m_2 & -m_1 & m_0 & l_2 & -l_1 & l_0 
\end{pmatrix}
\]
with  $l = V(l_0,l_1,l_2)$ and $m_0, m_1, m_2$ certain linear forms not all equal to zero. 

Let $A_X =\C [x_0,\dots , x_4]/(F)$ be the homogeneous coordinate ring and consider $S^t$ as a map
\[
 6 A_X (-1) \xrightarrow{S^t}  A_X \oplus A_X .
\]
Here the first copy of $A_X$ in $A_X \oplus A_X$ corresponds to the first row of $S^t$ above, the second copy of $A_X$ to the second row. Projecting onto the first copy of $A_X$ we see that $\mathrm{im}\, S^t$ sits in an extension
\[
0 \to K \to \mathrm{im}\, S^t \to I_l \to 0
\]
where $I_l= (l_0,l_1,l_2)$ and $K$ contains $I_l$. We need to show that in cases (d) and (e) we indeed have $K=I_l$. Denoting $e_1, e_2$ a basis of $A_X \oplus A_X$, this amounts to proving that whenever an $A_X$-linear combination 
\[
\alpha (l_2e_1 +m_2e_2 ) + \beta ( l_1e_1 +m_1e_2 ) + \gamma ( l_0 e_1 + m_0 e_2) 
\]
is such that 
\[
\alpha m_2e_2  + \beta m_1e_2  + \gamma m_0 e_2 = 0
\]
then
\[
\alpha m_2  + \beta m_1  + \gamma m_0 
\]
is already in the ideal generated by $(l_0, l_1, l_2)$ in $A_X$. For this we will use that $([M],[F])\in \wMM_X$, and that by the main Theorem, \cite[Theorem 2.1]{BB22-2} this means that the scheme-theoretic intersection of $Y$, defined by the $2\times2$ minors of $S$, and $X$ contains a degree $2$ curve. This will give us enough of a connection between $F$ and the possible relations between $l_0, l_1, l_2$ modulo $F$ to conclude. 

First a preliminary observation: suppose that 
\[
\alpha l_2 +  \beta l_1 + \gamma  l_0 = g \cdot F
\]
for $\alpha, \beta, \gamma, g \in \C [x_0,\dots , x_4]$, and write 
\[
F = q_2 l_2 + q_1l_1 +q_0l_0 
\]
with $q_j$ some quadrics, which is always possible since $X$ in particular contains $l$. Then
\[
(\alpha -gq_2) l_2 + ( \beta-gq_1) l_1 + (\gamma -gq_0) l_0 = 0
\]
and in particular, all relations modulo $F$ between $l_2, l_1, l_0$ are generated by Koszul relations and the one relation $q_2 l_2 + q_1l_1 +q_0l_0 \equiv 0 \mod F$. Whenever $(\alpha, \beta , \gamma)$ gives a Koszul relation between $l_2, l_1, l_0$, clearly $\alpha m_2  + \beta m_1  + \gamma m_0$ is in the ideal generated by $(l_0, l_1, l_2)$. So it suffices to check if also
\[
q_2 m_2 + q_1m_1 +q_0m_0 \in (l_0, l_1, l_2)
\]
if we write $F = q_2 l_2 + q_1l_1 +q_0l_0$. We distinguish cases (d) and (e), starting with (e) which is a little simpler. 
In that case, the matrix $S^t$ takes the form 
\[
S^t = \begin{pmatrix}
	l_2 & -l_1 & l_0 & 0 & 0 & 0  \\
  0 & 0 & l_3 & l_2 & -l_1 & l_0 
\end{pmatrix}. 
\]
The unique pure-dimensional degree $2$ subscheme contained in $Y$ in this case is the plane double line with saturated ideal $(l_1, l_2, l_0^2)$. If $F=q_2 l_2 + q_1l_1 +q_0l_0$ is in that ideal, we must have $q_0l_0 \in (l_1, l_2, l_0^2)$, meaning $q_0$ is in $(l_1, l_2, l_0)$. Therefore, $q_2 m_2 + q_1m_1 +q_0m_0 = q_0l_3 \in (l_0, l_1, l_2)$ in this case. 

In case (d)
\[
S^t = \begin{pmatrix}
	l_2 & -l_1 & l_0 & 0 & 0 & 0  \\
  0 & -l_4 & l_3 & l_2 & -l_1 & l_0 
\end{pmatrix},
\]
and the saturation of the ideal of  $2\times 2$ minors in that case is the ideal of a double line on the quadric $l_2=0, l_0l_4-l_1l_3=0$. It is generated by $l_2, l_0l_4-l_1l_3, l_1^2, l_0l_1, l_0^2$. Again $F=q_2 l_2 + q_1l_1 +q_0l_0$. The linear forms $l_0, \dots , l_4$ are independent, and putting $l_2=0$, which amounts to working in the polynomial ring in $l_0, \dots , l_3$ we see that $\overline{F} = \overline{q}_1 l_1 +\overline{q}_0 l_0$ is in the ideal generated by 
\begin{align*}
(-l_3)l_1 + l_4 l_0 &  \\
 l_1^2 & \\
 l_0^2 & \\
 l_0l_1 
\end{align*}
where the overlines mean ``put $l_2=0$", or equivalently, work in the polynomial ring in $l_0, \dots , l_3$. Subtracting of a multiple of $(-l_3)l_1 + l_4 l_0$ we can assume that $\overline{F} = \overline{q}_1 l_1 +\overline{q}_0 l_0$ is even in the ideal generated by $l_1^2, l_0l_1, l_0^2$, meaning that the new $\overline{q}_1, \overline{q}_0$ are in $(l_1,l_0)$. Summarising, that means that we can write the original $q_1, q_0$ as
\begin{align*}
q_1 =&  \lambda_1 l_2 - \lambda l_3 + q_1' \\
q_0= & \lambda_2 l_2 + \lambda  l_4 + q_2' 
\end{align*}
with $\lambda_1, \lambda_2, \lambda$ some linear forms and $q_1', q_2' \in (l_1,l_0)$. 
Therefore also in this case, 
\[
q_2 m_2 + q_1m_1 +q_0m_0 = q_1 l_4 + q_0l_3 \in (l_0, l_1, l_2).
\]
\end{proof}

\begin{theorem}\label{tComparisonDruel}
There is an isomorphism
\[
\im\, \Pfad \colon \wMM_X \to \bMM_X .
\]
sending $([M], [F])$ to $\FF_M$. 
\end{theorem}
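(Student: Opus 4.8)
The plan is to construct the morphism $\im\,\Pfad\colon \wMM_X \to \bMM_X$ first as a set-theoretic map, upgrade it to a morphism, show it is bijective, and then invoke smoothness of $\bMM_X$ together with Zariski's main theorem. The assignment $([M],[F])\mapsto \FF_M$ is defined by the image of the Pfaffian adjoint $\Pfad(M)\colon 6\OO_X(-1)\to 6\OO_X(1)$; one must first check this lands in $\bMM_X$, i.e. that $\FF_M$ is always a semistable sheaf with $c_1=0,c_2=2,c_3=0$. When $\Pf(M)\neq 0$, Lemma \ref{lDruelNonZero} identifies $\FF_M$ with the vector-bundle cokernel $\EE_M$, which is exactly a point of the open locus $\oMM_X$. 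When $\Pf(M)=0$, by Theorem \ref{tGeometryM0} we may assume $M$ is one of the cases (a)--(f) of Table \ref{tPfaffZero}, and Lemma \ref{lDruelPrep} reduces $\FF_M$ to $\FF_S=\im(6\OO_X(-1)\xrightarrow{S^t}2\OO_X)$; then Lemmas \ref{lConic}, \ref{lTwoLines}, \ref{lNotPolystable} identify $\FF_S$ respectively with the conic-sheaf of type $(\alpha)$ (case (a)), with $I_l\oplus I_{l'}$ of type $(\beta)$ (cases (b),(c),(f)), and with a nontrivial self-extension of $I_l$ (cases (d),(e)). In each instance this is a torsion-free sheaf with the right Chern classes; in cases (d),(e) it is strictly semistable but not polystable, whose $S$-equivalence class is that of $I_l\oplus I_l$, which is exactly what a point of $\bMM_X$ records — consistent with the fact that cases (d),(e) in Table \ref{tPfaffZero} are strictly semistable but not polystable, so they do not even give distinct points of $\wMM$ beyond the polystable representatives.

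Next I would promote this to an actual morphism of varieties. Over the open locus where $\Pf(M)\neq 0$ this is standard: the family of cokernels $\EE_M$ of the universal skew matrix is a flat family of sheaves on $X$, inducing a classifying morphism to $\bMM_X$ which agrees with $\im\,\Pfad$ by Lemma \ref{lDruelNonZero}. Since $\wMM_X$ is reduced and $\bMM_X$ is separated, and the open locus is dense, the map extends to at most one morphism on all of $\wMM_X$; to see it extends at all one can either build a global flat family of the images $\FF_M$ using the universal $\Pfad$ and check that the relevant sheaf cohomology has constant dimension along $\wMM_X$ (so the image is a subbundle and the family is flat), or argue valuatively using that $\bMM_X$ is proper. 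Bijectivity is then the heart of the matter and is assembled from the lemmas: surjectivity onto $\oMM_X$ is the converse direction of Lemma \ref{lDruelNonZero} combined with the standard presentation of stable bundles recalled from \cite{Dru00}, \cite{Beau02}; surjectivity onto $\mathcal{A}$ (type $(\alpha)$) is the second half of Lemma \ref{lConic}; surjectivity onto $\mathcal{B}$ (type $(\beta)$) is the second half of Lemma \ref{lTwoLines}. Injectivity is the uniqueness clauses of Lemmas \ref{lConic} and \ref{lTwoLines} — that the sheaf determines $S$ up to row/column operations and hence $M$ up to $G$ by Proposition \ref{pSameInformation} — together with the analogous uniqueness for bundles, where one recovers $M$ from the minimal free resolution of $\Gamma_*(\EE_M)$ and applies Proposition \ref{pSameInformation}. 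Care is needed about the boundary: points of $\overline{\mathcal{A}}-\mathcal{A}$ lie in $\mathcal{B}$, and one must check the two descriptions glue, i.e. that a conic degenerating to two intersecting (or a double) line produces the sheaf $\FF_S$ predicted by the $(\beta)$ side; this follows because $\wMM$ is built as a closure and $\psi$ is continuous, so the identification is forced on the boundary once it holds on the open parts.

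Finally, with $\im\,\Pfad$ a bijective morphism of varieties and $\bMM_X$ known to be smooth — hence normal — Zariski's main theorem gives that $\im\,\Pfad$ is an isomorphism, exactly as flagged in the introduction. I expect the main obstacle to be the morphism property across the jump locus $\Pf(M)=0$: one must produce a single flat family of torsion-free sheaves on $X$ over all of $\wMM_X$ interpolating the vector bundles $\EE_M$ and the various boundary sheaves $\FF_S$ of types $(\alpha)$, $(\beta)$ and the self-extensions in (d),(e), and verify that the classifying map to the moduli functor is a morphism rather than merely a map on points; equivalently, one must check that $\im\,\Pfad(M)$, viewed via the universal $\Pfad$ on $\wMM_X\times X$, is flat over $\wMM_X$, which amounts to a semicontinuity computation on the ranks of the maps $\Pfad(M)$ fibrewise that is uniform in the family. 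The individual fibrewise identifications are already done in the lemmas; the remaining work is to see that they fit together in families, and this is where the detailed geometry of the closure $\wMM$ inside $\mathcal{T}$ is used.
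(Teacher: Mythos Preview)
Your overall plan matches the paper's: identify the map on points via the lemmas, argue it is a morphism, establish bijectivity, and conclude with Zariski's main theorem using smoothness of $\bMM_X$. The paper is considerably terser about the morphism property than you are --- it simply asserts that the lemmas assemble into a morphism from $\wskss$ constant on $G$-orbits --- and it deduces surjectivity from dominance onto $\oMM_X$ plus projectivity of $\wMM_X$ rather than exhibiting explicit preimages on $\mathcal{A}$ and $\mathcal{B}$ as you propose (though the lemmas do provide these).

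There is, however, one genuine gap in your injectivity argument on the bundle locus $\oMM_X$. Recovering $M$ from the minimal free resolution of $\Gamma_*(\EE_M)$ only determines $M$ up to the two-sided action $M\mapsto S^{-1}MT$ of $\GL_6\times\GL_6$, not up to the congruence action $M\mapsto U^tMU$ of $G=\GL_6$. Proposition \ref{pSameInformation} does not bridge this: it is a statement about matrices $M$ from Table \ref{tPfaffZero} (all with vanishing Pfaffian), reconstructed from the syzygies of $S^t$, and has no content in the Pfaffian-nonzero case where there is no $S$. What is actually needed here --- and what the paper invokes --- is the linear algebra Lemma \ref{lAppendixGeneralSkew} in the appendix: if $M$ is skew and $M'=S^{-1}MT$ is again skew with $S,T\in\GL_6(\C)$, then in fact $M'=U^tMU$ for some $U\in\GL_6(\C)$. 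Without this (or an equivalent statement) your injectivity step on $\oMM_X$ does not go through.
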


\begin{proof}
Lemmata \ref{lDruelNonZero}, \ref{lConic}, \ref{lTwoLines}, \ref{lNotPolystable} show that $\im\, \Pfad$ defines a morphism from $\wskss$ to $\bMM_X$ that is constant on $G$-orbits, giving a morphism $\wMM_X \to \bMM_X$. 

To show that $\im\, \Pfad$ is injective on closed points, we distinguish two cases. 
If $\EE \in \oMM_X\subset \bMM_X$, then $\EE$ fits into a short exact sequence 
\[
0 \to 6 \OO_{\P^4}(-2) \xrightarrow{M} 6 \OO_{\P^4}(-1) \to \EE \to 0 
\]
where $M$ is a skew $6\times 6$ matrix whose Pfaffian defines $X$. 
If 
\[
0 \to 6 \OO_{\P^4}(-2) \xrightarrow{M'} 6 \OO_{\P^4}(-1) \to \EE \to 0 
\]
is a second such sequence, then there exist invertible matrices $S,T \in \mathrm{GL_6}(\C)$ such that
\[
	M' = S^{-1}M T.
\]
By Lemma \ref{lAppendixGeneralSkew} this implies that there exists  an invertible matrix $U \in \mathrm{GL_6}(\C)$
such that
\[
M' = S^{-1} M T = U^t M U.
\]
Therefore $M$ and $M'$ are in the same $G$-orbit.

\medskip

If $\EE\in \bMM_X - \oMM_X = \mathcal{A}\cup \mathcal{B}$, then the assertion follows from the uniqueness statements in by Lemmata \ref{lTwoLines} and \ref{lConic}. 

Moreover, $\im\, \Pfad$ is dominant since all of $\oMM_X$ is in its image, hence bijective on closed points since $\wMM_X$ is projective. Since $\bMM_X$ is known to be smooth by Druel's results, the morphism $\im\, \Pfad $ must be an isomorphism by Zariski's main theorem.
\end{proof}


\


\appendix

\section{A linear algebra lemma}\label{sAppendixSkew}

\noindent
We want to prove the following

\begin{lemma} \label{lAppendixGeneralSkew}
Let $M$ be a skew-symmetric $n \times n$ matrix with entries in a $\C$-algebra $R$ and $A,B \in \mathrm{GL_n}(\C)$  invertible matrices such that $M'=A^{-1}MB$ is also skew. Then there exists an an invertible matrix
$S \in \mathrm{GL_n}(\C)$ such that
\[
	A^{-1}MB = S^t M S.
\]
\end{lemma}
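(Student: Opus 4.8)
The plan is to reduce the statement to a normal-form computation over $\C$. First I would observe that the hypothesis says $M' = A^{-1}MB$ is skew, i.e. $(M')^t = -M'$, which unwinds to $B^t M^t (A^{-1})^t = -A^{-1}MB$, and using $M^t = -M$ this becomes $B^t M (A^t)^{-1} = A^{-1} M B$, or equivalently
\[
	(A B^t) M = M (A B^t)^t.
\]
So if I set $P = AB^t \in \mathrm{GL}_n(\C)$, the condition is exactly that $P M = M P^t$. The goal $A^{-1}MB = S^t M S$ can then be rephrased: writing $S = B^t (A B^t)^{-1} A^{?}$... more carefully, I want $S$ with $S^t M S = A^{-1} M B$. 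A natural guess is to look for $S$ of the form $S = A^{-1} Q$ with $Q$ to be determined, or to exploit a "square root" of $P$.

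\textbf{Key steps.} (1) Rephrase as above so that everything is controlled by the single matrix $P = AB^t$ satisfying $PM = MP^t$. (2) The crux: I want to produce $R \in \mathrm{GL}_n(\C)$ with $R M R^t = P M$ (equivalently, since $PM$ is skew — check: $(PM)^t = M^t P^t = -MP^t = -PM$ — it makes sense to ask for such $R$). Granting this, one computes directly that $S := B^{-t} R^{-1} A$... and then verifies $S^t M S = A^{-1} M B$ by substituting $RMR^t = PM = AB^t M$ and cancelling. (I would do this substitution carefully on scratch paper; the point is it is a finite bookkeeping check.) (3) To produce $R$: decompose $\C^n$ using the pencil structure. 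Since $M$ is a constant skew form over $\C$ of some rank $2r$, choose a basis in which $M = \left(\begin{smallmatrix} J & 0 \\ 0 & 0\end{smallmatrix}\right)$ with $J$ the standard symplectic block of size $2r$ and a $0$-block of size $n-2r$. The relation $PM = MP^t$ then forces $P$ to be block upper triangular for the splitting $\C^n = \C^{2r}\oplus \C^{n-2r}$ with the top-left block $P_{11}$ lying in the symplectic similitude group $\mathrm{GSp}_{2r}$ (it satisfies $P_{11} J = J P_{11}^t$, i.e. $P_{11}^t J^{-1} P_{11}$ is a scalar... one checks $P_{11}$ preserves $J$ up to scalar), and $P_{22}$ arbitrary invertible, $P_{21} = 0$. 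Then $PM$ only sees $P_{11}$ in the top-left $2r$ block, and since $P_{11}\in \mathrm{GSp}_{2r}(\C)$ with multiplier $\mu$, we have $P_{11} J = \mu\, (\text{something conjugate to } J)$; writing $P_{11} J P_{11}^t$... hmm, rather: we need $R$ with $R M R^t = PM$. Take $R = \left(\begin{smallmatrix} R_{11} & 0 \\ 0 & I \end{smallmatrix}\right)$; then $RMR^t = \left(\begin{smallmatrix} R_{11} J R_{11}^t & 0 \\ 0 & 0\end{smallmatrix}\right)$, so I just need $R_{11} J R_{11}^t = P_{11} J$. Since $P_{11} J$ is an invertible skew $2r\times 2r$ matrix over $\C$, and any invertible skew form over $\C$ is congruent to $J$, such $R_{11}$ exists.

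\textbf{Main obstacle.} The genuine content — and the step I expect to be fiddliest — is step (3): extracting from $PM = MP^t$ the block-triangular shape of $P$ and, crucially, that the invertible part $P_{11}J$ is again a nondegenerate alternating matrix so that congruence-classification of alternating forms over $\C$ (every nondegenerate one is $\cong J$) applies to yield $R_{11}$. The degenerate directions (the kernel of $M$) are harmless because $R$ can act as the identity there. Everything else — the two unwindings of "skew" and the final substitution verifying $S^t M S = A^{-1}MB$ — is routine matrix algebra that I would carry out explicitly but which contains no surprises. One caveat to watch: the problem allows $R$ a general $\C$-algebra, but $M$ still has constant (scalar) entries, so all the normal-form reductions happen over $\C$ and only the verification identities are performed over $R$; no issue arises since $\C \hookrightarrow R$.
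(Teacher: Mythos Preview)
Your proposal contains a genuine gap stemming from a misreading of the hypotheses. You write in your caveat that ``$M$ still has constant (scalar) entries,'' but this is not what the lemma says: $M$ is a skew-symmetric matrix \emph{with entries in the $\C$-algebra $R$}, while only $A$, $B$, and the sought-for $S$ are required to lie in $\mathrm{GL}_n(\C)$. In the paper's intended application, $R=\C[x_0,\dots,x_4]$ and $M$ is a matrix of linear forms. Consequently, your step~(3) collapses: you cannot choose a basis of $\C^n$ putting $M$ into the standard symplectic block form $\left(\begin{smallmatrix} J & 0\\ 0 & 0\end{smallmatrix}\right)$, and the congruence classification of alternating forms over $\C$ simply does not apply to $P_{11}J$ or to $M$ itself. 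Your rephrasing in step~(1) (setting $P=AB^t$ and deducing $PM=MP^t$) is correct and useful, but after that the argument has no traction.

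The paper's argument avoids touching $M$ altogether and works entirely on the $\C$-side. After the reduction to $A=\id$ (equivalent to your $P=B$), one puts $B$ --- not $M$ --- into Jordan normal form. A first proposition shows that the condition $MB = B^t M$ forces $M$ to be block-diagonal with respect to the generalised eigenspace decomposition of $B$, so one may assume $B$ has a single eigenvalue $\lambda\in\C^*$. Writing $B=\lambda\,\id+N$ with $N$ nilpotent, one then solves $S^2=B$ for $S$ as a \emph{polynomial in $N$} (the binomial series for $\sqrt{\lambda}\,(\id+N/\lambda)^{1/2}$ terminates). Because $S$ is a polynomial in $N$ and $MN=N^tM$, one gets $MS=S^tM$, whence $S^tMS=MS^2=MB$. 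The key point you missed is that $S$ must be constructed from $B$ alone, in a way that automatically intertwines with $M$; normalising $M$ is both impossible and unnecessary.
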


\noindent
Since 
\[
	A^{-1}MB = S^t M S \iff M(BA^t) = (AS^t) M (SA^t)
\]
we can reduce to the case where $A = \id$.
Furthermore, we observe that for any $T \in \mathrm{GL_n}(\C)$
\[
	M' = MB \iff T^tM'T = (T^tMT)(T^{-1}BT).
\]
We can therefore assume that $B$ has Jordan normal form.

By the following proposition we can reduce to the case where all Jordan blocks of $B$ have
the same eigenvalue:

\begin{proposition} 
Let $M$ be skew and
assume that $B$ has Jordan blocks for pairwise different eigenvalues $\lambda_1, \dots , \lambda_n \in \C^*$. We write in block form 
\[
	MB = \begin{pmatrix} 
			M_{11} & \dots & M_{1n} \\
			\vdots & \ddots & \vdots \\
			-M_{1n}^t & \dots & M_{nn} 
		 \end{pmatrix} 
		 \begin{pmatrix}
		 B_{\lambda_1} & & \\
		  & \ddots & \\
		  & & B_{\lambda_n}
		 \end{pmatrix}
		 = M'
\]
with $B_{\lambda_i}$ a square matrix containing all the Jordan blocks for the eigenvalue $\lambda_i$ on the diagonal.
If $M'$ is also skew, then $M_{ij}=0$ for $i\neq j$. 
\end{proposition}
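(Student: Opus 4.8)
The plan is to read off from the skew-symmetry of $M$ and of $M' = MB$ that each off-diagonal block $M_{ij}$, $i \neq j$, satisfies a Sylvester-type intertwining relation between $B_{\lambda_i}$ and $B_{\lambda_j}$, and then to exploit that these two matrices share no eigenvalue.

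First I would extract the block identity. Since $B$ is block diagonal, $(M')_{ij} = M_{ij} B_{\lambda_j}$. Skew-symmetry of $M'$ gives $(M')_{ij} = -\bigl((M')_{ji}\bigr)^t = -B_{\lambda_i}^t M_{ji}^t$, while skew-symmetry of $M$ gives $M_{ji}^t = -M_{ij}$. Combining these, the sign and the $B_{\lambda_j}$ factor rearrange to leave the clean identity
\[
B_{\lambda_i}^t\, M_{ij} \;=\; M_{ij}\, B_{\lambda_j}, \qquad i \neq j .
\]

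Next I would insert the Jordan structure. Write $B_{\lambda_i} = \lambda_i I + N_i$ and $B_{\lambda_j} = \lambda_j I + N_j$ with $N_i, N_j$ strictly upper triangular, hence nilpotent, say $N_i^{\,d} = 0$ and $N_j^{\,e} = 0$. The identity above rearranges to
\[
(\lambda_i - \lambda_j)\, M_{ij} \;=\; M_{ij} N_j - N_i^t M_{ij} \;=\; T(M_{ij}),
\]
where $T$ is the $\C$-linear endomorphism $X \mapsto X N_j - N_i^t X$ of the space of rectangular matrices of the relevant size with entries in $R$. The two operators $X \mapsto X N_j$ and $X \mapsto N_i^t X$ commute and are nilpotent of orders at most $e$ and $d$ respectively, so a binomial expansion gives $T^{\,d+e-1} = 0$: in every term of the expansion one of the two factors is raised to an exponent that already annihilates it. This step is purely formal and uses nothing about $R$ beyond being a ring, which is the only point where a little care is needed, since the entries of $M_{ij}$ lie in the arbitrary $\C$-algebra $R$ rather than in $\C$, so spectral arguments are not directly available.

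Finally, applying $T$ repeatedly to the displayed equation yields $(\lambda_i - \lambda_j)^k M_{ij} = T^k(M_{ij})$ for every $k$; taking $k = d+e-1$ gives $(\lambda_i - \lambda_j)^{d+e-1} M_{ij} = 0$, and since $\lambda_i \neq \lambda_j$ this forces $M_{ij} = 0$. As $i \neq j$ was arbitrary, the proposition follows. There is really no serious obstacle here; the content is the block computation producing the intertwining relation, together with the disjoint-spectrum principle for Sylvester equations, phrased operator-theoretically so that it remains valid over the general coefficient algebra $R$.
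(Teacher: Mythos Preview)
Your argument is correct. You and the paper begin identically, extracting the intertwining relation $B_{\lambda_i}^t M_{ij} = M_{ij} B_{\lambda_j}$ from the two skew-symmetry conditions, but you then diverge in how you finish. The paper proceeds by a bare-hands induction on the number of rows of $M_{ij}$: using that $B_{\lambda_i}^t$ is lower triangular, it peels off the last row, applies the induction hypothesis to the remaining rows, and reduces the last row to the one-row base case $m(B_{\lambda_j}-\lambda_i\cdot\id)=0$. You instead write $B_{\lambda_i}=\lambda_i I + N_i$, recast the relation as $(\lambda_i-\lambda_j)M_{ij}=T(M_{ij})$ with $T$ a difference of commuting nilpotent operators, and conclude by nilpotence of $T$ that $(\lambda_i-\lambda_j)^k M_{ij}=0$ for large $k$. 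Your route is the standard Sylvester-equation argument and is cleaner and coordinate-free; the paper's induction is more elementary and makes the triangular shape of the Jordan blocks do the work explicitly. Your remark that the argument must avoid spectral theory because the entries lie in an arbitrary $\C$-algebra $R$ is well taken and handled correctly; the paper's induction also implicitly relies only on the invertibility of $B_{\lambda_j}-\lambda_i\cdot\id$ over $\C$, so both arguments are valid in this generality.
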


\begin{proof}
Choose indices $i< j$. 
If $M'$ is skew then
\[
	M'_{ij}= M_{ij}B_{\lambda_j} = -(M_{ji}')^t= - \bigl( -(M_{ij})^tB_{\lambda_i}\bigr)^t = B_{\lambda_i}^t M_{ij}.
\]
We want to prove $M_{ij} = 0$ by induction on the number of rows of $M_{ij}$. 

\medskip

\noindent
If $M_{ij}$ has only one row, then
\[
	M_{ij}B_{\lambda_j} = B_{\lambda_i}^t M_{ij} = \lambda_i M_{ij} = M_{ij} (\lambda_i \cdot\id)
\]
which implies
\[
	 M_{ij}(B_{\lambda_j}-\lambda_i\cdot\id) = 0.
\]
Since $\lambda_i \not=\lambda_j$ we get that $M_{ij}=0$.

\medskip
\noindent
If $M_{ij}$ has more than one row, we write the equation above as
\[
	 B_{\lambda_i}^t M_{ij}= \begin{pmatrix}
	(\widetilde{B}_{\lambda_i})^t & 0 \\
	\begin{pmatrix} 0 & \cdots  &  0 &  \epsilon \end{pmatrix}& \lambda_i
	\end{pmatrix}
	\begin{pmatrix}
	\widetilde{M}_{ij} \\
	m
	\end{pmatrix}
	= \begin{pmatrix}
	\widetilde{M}_{ij} \\
	m
	\end{pmatrix}
	B_{\lambda_j}
\]
with $\epsilon = 0$ or $1$. We see that $\widetilde{M}_{ij}$ satisfies the induction hypothesis and therefore $\widetilde{M}_{ij}=0$. The
equation above then reduces to
\[
	\lambda_i m = m B_{\lambda_j}
\]
which as in the one-row case implies $m=0$.
\end{proof}

\noindent
The case where $B$ has only one eigenvalue is treated by 

\begin{proposition}
Let $M$ be skew and $B_\lambda$ a matrix consisting of Jordan blocks with eigenvalue $\lambda \in \C^*$. If
\[
	M' = MB_\lambda
\]
is again skew, then there exists an invertible matrix $S$ such that $S^2=B_\lambda$ and
\[
	M' = S^t M S.
\]
\end{proposition}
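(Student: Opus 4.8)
The plan is to take for $S$ a polynomial square root of $B_\lambda$ and then verify the conjugation identity by a short computation resting on the observation that the skewness hypothesis is equivalent to an intertwining relation between $B_\lambda$ and $B_\lambda^t$.

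\medskip

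First I would reformulate the hypothesis. Since $M^t=-M$, the matrix $M'=MB_\lambda$ is skew if and only if
\[
MB_\lambda = -(MB_\lambda)^t = -B_\lambda^t M^t = B_\lambda^t M,
\]
so from now on I would work with the relation $MB_\lambda = B_\lambda^t M$.

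Next I would construct $S$. Write $B_\lambda = \lambda I + N$ with $N$ nilpotent (the nilpotent part of the Jordan blocks). Because $\lambda\in\C^*$, the matrix $I+\lambda^{-1}N$ is unipotent, so the binomial series $\sum_{k\ge 0}\binom{1/2}{k}\lambda^{-k}N^k$ for $(I+\lambda^{-1}N)^{1/2}$ is a finite sum. Fixing a square root $\sqrt{\lambda}\in\C^*$ of $\lambda$ and setting $S := \sqrt{\lambda}\,(I+\lambda^{-1}N)^{1/2}$, one obtains $S = p(B_\lambda)$ for a suitable polynomial $p\in\C[x]$, with $S$ invertible (all its eigenvalues equal $\sqrt{\lambda}\ne 0$) and $S^2 = \lambda(I+\lambda^{-1}N) = B_\lambda$.

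Then I would transport the intertwining relation through $p$: iterating $MB_\lambda = B_\lambda^t M$ gives $MB_\lambda^k = (B_\lambda^t)^k M$ for all $k\ge 0$, hence $Mp(B_\lambda) = p(B_\lambda^t)M$; since transposition commutes with powers of a single matrix, $p(B_\lambda^t) = p(B_\lambda)^t = S^t$, and therefore $MS = S^t M$. Finally, combining these,
\[
S^t M S = S^t(MS) = S^t(S^t M) = (S^t)^2 M = (S^2)^t M = B_\lambda^t M = MB_\lambda = M',
\]
which is what is claimed.

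\medskip

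I do not anticipate a serious obstacle. The only step that deserves a word of care is the existence of a \emph{polynomial} square root of $B_\lambda$, and this is precisely where the hypotheses $\lambda\ne 0$ and the fact that $B_\lambda$ has a single eigenvalue are used: the nilpotent perturbation makes the binomial series terminate, so no functional calculus is required. Everything else is formal manipulation with the relation $MB_\lambda = B_\lambda^t M$.
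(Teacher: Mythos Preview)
Your proof is correct and follows essentially the same approach as the paper: both derive the intertwining relation $MB_\lambda = B_\lambda^t M$, construct $S$ as a polynomial square root of $B_\lambda$ (you via the binomial series in $B_\lambda$, the paper via an undetermined power series in the nilpotent part $N$, which amounts to the same thing), and then use that $S$ is a polynomial in $B_\lambda$ to obtain $MS = S^t M$ and conclude. The only cosmetic difference is the final bookkeeping, where the paper writes $M' = MS^2 = (MS)S = S^tMS$ directly rather than passing through $(S^t)^2 M = B_\lambda^t M$.
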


\begin{proof}
Since $M$ and $M'$ are skew, we have
\[
	MB_{\lambda} = M' = -(M')^t = -B_{\lambda}^tM^t = B_{\lambda}^tM
\]
We can now write 
\[
	B_\lambda = \lambda \cdot \id + N
\]
with $N$ nilpotent. Plugging this into the above equation we get
\[
	M (\lambda \cdot \id + N) = (\lambda \cdot \id + N^t)M 
	\quad \iff \quad
	MN = N^tM.
\]	
For $s_i \in \C$ we consider the matrix 
\[
	S = \sum_i s_i N^i .
\]
Since the $s_i$ are in $\C$ we can successively solve the equation $S^2 = B_{\lambda}$ for the $s_i$, computing $s_0, s_1, s_2 \dots$ in this order. There are then two 
solutions corresponding to the two solutions of $s_0^2 = \lambda$ (after which $s_1, \dots$ are uniquely determined using $\lambda\neq 0$). Now since $MN = N^tM$ we also have
$MS = S^tM$. With this we get
\[
	M'=MB_{\lambda} = MS^2 = (MS) S = S^tMS
\]
as claimed.
\end{proof}

\providecommand{\bysame}{\leavevmode\hbox to3em{\hrulefill}\thinspace}
\providecommand{\href}[2]{#2}

\end{document}